\def\R{{\mathbb{R}}}
\def\Z{{\mathbb{Z}}}
\renewcommand{\P}{\mathbb{P}}
\newtheorem{theorem}{Theorem}
\newtheorem{lemma}{Lemma}
\newtheoremstyle{likedef}
  {}%
  {}%
  {}%
  {\parindent}%
  {\bfseries}%
  {.}%
  {.5em}%
  {}%
\theoremstyle{likedef}
\newtheorem{definition}{Definition}[section]
\newtheorem{remark}{Remark}
\numberwithin{equation}{section}
\begin{document}

\title{Connectivity properties of random interlacement and\\ intersection of random walks}

\author{Bal\'azs R\'ath\thanks{ETH Z\"urich, Department of Mathematics, R\"amistrasse 101, 8092 Z\"urich. Email: balazs.rath@math.ethz.ch and artem.sapozhnikov@math.ethz.ch.
The research of both authors has been supported by the grant ERC-2009-AdG  245728-RWPERCRI.}
\and
Art\"{e}m Sapozhnikov\footnotemark[1]
}

\maketitle

\footnotetext{MSC2000: Primary 60K35, 82B43.}
\footnotetext{Keywords: Random interlacement; random walk; intersection of random walks; capacity; Wiener test.}

\begin{abstract}
We consider the interlacement Poisson point process on the space of doubly-infinite $\Z^d$-valued trajectories modulo time shift, tending to infinity at positive and negative infinite times. 
The set of vertices and edges visited by at least one of these trajectories is the random interlacement at level $u$ of Sznitman \cite{SznitmanAM}. 
We prove that for any $u>0$, almost surely, 
(1) any two vertices in the random interlacement at level $u$ are connected via at most $\lceil d/2\rceil$ trajectories of the point process, and 
(2) there are vertices in the random interlacement at level $u$ which can only be connected via at least $\lceil d/2\rceil$ trajectories of the point process. 
In particular, this implies the already known result of \cite{SznitmanAM} that the random interlacement at level $u$ is connected. 
\end{abstract}

\section{Introduction}
\noindent

The model of random interlacements was introduced by Sznitman in \cite{SznitmanAM} 
in order to describe the local picture left by the trajectory of a random walk on
the discrete torus $(\Z/N\Z)^d$, $d \geq 3$ when it runs up to times of order $N^d$,
or on the discrete cylinder $(\Z/N\Z)^d \times \Z$ , $d \geq 2$, 
when it runs up to times of order $N^{2d}$, see \cite{sznitman_cylinders}, \cite{windisch_torus}. 
Informally, the random interlacement Poisson point process consists of a countable collection of doubly infinite trajectories on $\Z^d$, 
and the trace left by these trajectories on a finite subset of $\Z^d$ ``looks like'' the trace of the above mentioned random walks. 

In this paper we investigate connectivity properties of the random interlacement, 
giving a detailed picture about how the collection of doubly infinite trajectories are actually interlaced.
Our methods are further developed in \cite{RS:resistance,RS:percolation} to show, respectively, that 
in dimensions $d\geq 3$ for all $u>0$, the random interlacement graph is almost surely transient, 
and the Bernoulli percolation on it has a non-trivial phase transition (even in wide enough slabs). 
Recently, J. \v{C}ern\'y and S. Popov \cite{CP:distance} used the techniques of this paper 
to prove the shape theorem for the graph distance on random interlacements. 

An essentially different proof of the main result of this paper was obtained by E. B. Procaccia and J. Tykesson \cite{PT}. 
It involves ideas of stochastic dimension theory developed in \cite{BKPS}.

\subsection{The model}\label{sec:model}
\noindent

Let $W$ be the space of doubly-infinite nearest-neighbor trajectories in $\Z^d$ ($d\geq 3$) which tend to infinity at positive and negative infinite times, and 
let $W^*$ be the space of equivalence classes of trajectories in $W$ modulo time shift. 
We write $\mathcal W$ for the canonical $\sigma$-algebra on $W$ generated by the coordinates $X_n$, $n\in\Z$, and 
$\mathcal W^*$ for the largest $\sigma$-algebra on $W^*$ for which the canonical map $\pi^*$ from $(W,\mathcal W)$ to $(W^*,\mathcal W^*)$ is measurable. 
Let $u$ be a positive number. 
We say that a Poisson point measure $\mu$ on $W^*$ has distribution $\mathrm{Pois}(u,W^*)$ if the following properties hold: 
For a finite subset $A$ of $\Z^d$, let $\mu_A$ be the restriction of $\mu$ to the set of trajectories from $W^*$ that intersect $A$, and 
let $N_A$ be the number of trajectories in $\mathrm{Supp}(\mu_A)$. 
Then $\mu_A = \sum_{i=1}^{N_A}\delta_{\pi^*(X_i)}$, where $X_i$ are doubly-infinite trajectories from $W$ parametrized in such a way that $X_i(0) \in A$ and $X_i(t) \notin A$ for all $t<0$ and for all $i\in\{1,\ldots,N_A\}$, and 
\begin{itemize}
\item[(1)]\label{distr:1}
The random variable $N_A$ has Poisson distribution with parameter $u\mathrm{cap}(A)$ (see \eqref{eq:defcap} for the definition of the $\mathrm{cap}(A)$). 
\item[(2)]\label{distr:2}
Given $N_A$, the points $X_i(0)$, $i\in\{1,\ldots,N_A\}$, are independent and distributed according to the normalized equilibrium measure on $A$ (see \eqref{eq:defneqm} for the definition). 
\item[(3)]\label{distr:3}
Given $N_A$ and $(X_i(0))_{i=1}^{N_A}$, the corresponding forward and backward paths are conditionally independent, $(X_i(t), t\geq 0)_{i=1}^{N_A}$ are distributed as independent simple random walks, and $(X_i(t), t\leq 0)_{i=1}^{N_A}$ are distributed as independent random walks conditioned on not hitting $A$.
\end{itemize}
Properties (1)-(3) uniquely define $\mathrm{Pois}(u,W^*)$ as proved in Theorem~1.1 in \cite{SznitmanAM}. 
In fact, Theorem~1.1 in \cite{SznitmanAM} gives a coupling of the Poisson point measures $\mu(u)$ with distribution $\mathrm{Pois}(u,W^*)$ for all $u>0$, but 
we will not need such a general statement here.  
We also mention a couple of properties of the distribution $\mathrm{Pois}(u,W^*)$, which will be useful in the proofs. 
Property (4) follows from the above definition of $\mathrm{Pois}(u,W^*)$, and (5) is a property of Poisson point measures. 
\begin{enumerate}
\item[(4)]\label{distr:4}
Let $\mu_1$ and $\mu_2$ be independent Poisson point measures on $W^*$ with distributions $\mathrm{Pois}(u_1,W^*)$ and $\mathrm{Pois}(u_2,W^*)$, respectively. 
Then $\mu_1+\mu_2$ has distribution $\mathrm{Pois}(u_1+u_2,W^*)$. 
\item[(5)]\label{distr:5}
Let $S_1,\ldots,S_k$ be disjoint elements of $\mathcal W^*$. 
We denote by $I(S_i)\mu$ the restriction of $\mu$ to the set of trajectories from $S_i$. 
Then $I(S_1)\mu,\ldots,I(S_k)\mu$ are independent Poisson point measures on $W^*$.
\end{enumerate}
We refer the reader to \cite{SznitmanAM} for more details. 
For a Poisson point measure $\mu$ with distribution $\mathrm{Pois}(u,W^*)$, the random interlacement $\mathcal I$ at level $u$ is defined as 
\begin{equation}\label{eq:definterlacement}
\mathcal I = \mathcal I(\mu) = \bigcup_{w\in\mathrm{Supp}(\mu)}\mathrm{range}(w). 
\end{equation}

\subsection{The result}
\noindent

We consider a random point measure $\mu$ on $W^*$ distributed as $\mathrm{Pois}(u,W^*)$. 
We denote by $\mathbb P$ the law of $\mu$. 
Our main result concerns the geometric properties of the support of $\mu$. 
Remember that the support of $\mu$ consists of a countable set of doubly-infinite random walk trajectories modulo time shift. 
We construct the random graph $G=(V,E)$ as follows. The set of vertices $V$ is the set of trajectories from $\mathrm{Supp}(\mu)$, 
and the set of edges $E$ is the set of pairs of different trajectories from $\mathrm{Supp}(\mu)$ that intersect. 
Let $\mathrm{diam}(G)$ be the diameter of $G$. 
Our main result is the following theorem. 
\begin{theorem}\label{thm:diameter}
For $d\geq 3$, let 
\begin{equation}\label{def:sd}
s_d = \lceil (d-2)/2\rceil ,\
\end{equation}
where $\lceil a\rceil$ is the smallest integer not less than $a$. 
Then
\[
\mathbb P\left(\mathrm{diam}(G) = s_d\right) = 1 ,\
\]
In particular, we get an alternative proof of (2.21) in \cite{SznitmanAM}, which states that the random interlacement $\mathcal I$ 
is a connected subgraph of $\Z^d$. 
\end{theorem}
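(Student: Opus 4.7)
I would prove this in two steps, the upper bound $\mathrm{diam}(G)\le s_d$ and the lower bound $\mathrm{diam}(G)\ge s_d$, both via capacity estimates on nested graph neighborhoods. Fix a trajectory $w\in\mathrm{Supp}(\mu)$ and, for $k\ge 0$, set
\[
V_k(w)=\bigcup\bigl\{\mathrm{range}(w'):w'\in\mathrm{Supp}(\mu),\ d_G(w,w')\le k\bigr\}.
\]
Since $d_G(w,w^*)\le s_d$ is equivalent to $\mathrm{range}(w^*)\cap V_{s_d-1}(w)\neq\emptyset$, the upper bound reduces to showing that, almost surely, the range of every other trajectory $w^*\in\mathrm{Supp}(\mu)$ meets $V_{s_d-1}(w)$.

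The core step is the inductive capacity estimate
\[
\mathrm{cap}\bigl(V_k(w)\cap B(0,R)\bigr)\gtrsim R^{\min(2(k+1),\,d-2)}
\]
valid almost surely for all large $R$ (with logarithmic corrections in $d=3,4$). The base case $k=0$ is the classical lower bound for the capacity of a simple random walk's range in a ball. For the inductive step, properties (4) and (5) of $\mathrm{Pois}(u,W^*)$ imply that, conditionally on $V_{k-1}(w)$, the trajectories contributing to $V_k(w)\setminus V_{k-1}(w)$ form a Poisson family of $\mathrm{Pois}(u\,\mathrm{cap}(V_{k-1}(w)))$ walks, each entering $V_{k-1}(w)$ according to its normalized equilibrium measure and performing an independent forward random walk. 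Each such new trajectory contributes to $V_k$ a range of capacity $\asymp R^2$ in $B(0,R)$, and a second-moment estimate based on the $|x-y|^{2-d}$ decay of Green's function controls their mutual overlap; summing the contributions increases the capacity exponent by $2$ at each step, until it saturates at $\mathrm{cap}(B(0,R))\asymp R^{d-2}$. Because $s_d=\lceil(d-2)/2\rceil$ is the smallest integer with $2s_d\ge d-2$, saturation occurs precisely at $k=s_d-1$. Once $\mathrm{cap}(V_{s_d-1}(w)\cap B(0,R))\asymp R^{d-2}$, Wiener's test applied at dyadic shells (with independence across shells provided by property (5)) forces any independent doubly-infinite trajectory to meet $V_{s_d-1}(w)$ almost surely.

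For the lower bound, I would run the induction as an upper bound on capacity, giving $\mathrm{cap}(V_k(w)\cap B(0,R))\lesssim R^{2(k+1)}$, which is strictly below $R^{d-2}$ as long as $k\le s_d-2$. A trajectory entering $B(0,R)$ from outside therefore has probability bounded away from $0$ of missing $V_{s_d-2}(w)$. Combining this with translation-ergodicity of $\mu$ and an explicit construction of pairs of ``well-separated'' interlacement trajectories (using property (4) to superpose independent Poisson pieces supported in disjoint regions) yields, almost surely, at least one pair at graph distance $\ge s_d$. The technical heart of the argument is the inductive capacity bound: controlling both the overlap between the new trajectories added at step $k$ and the previously built set $V_{k-1}(w)$, and the mutual overlap among the new trajectories, requires careful Green's-function computations and a second-moment argument. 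The logarithmic corrections that appear in $d=3,4$ must be tracked delicately so that the exponents $2(k+1)$ and $d-2$ match up exactly at the critical level $k=s_d-1$.
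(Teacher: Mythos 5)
Your upper-bound strategy (iteratively growing capacity of neighborhoods $V_k(w)$, then applying Wiener's test to show saturation at level $s_d$) is the same idea the paper uses, but your treatment of the inductive step skips a genuine independence problem. Conditioning on $V_{k-1}(w)$ does not leave a fresh Poisson family: the trajectories that built $V_{k-1}(w)$ also contribute to $V_k(w)$, and conditioning on the shape of $V_{k-1}(w)$ reveals information about which trajectories intersect $V_{k-2}(w)$, which perturbs the conditional law of the remaining trajectories in a way that is not ``Poisson with parameter $u\,\mathrm{cap}(V_{k-1}(w))$.'' The paper circumvents this entirely by using property (4) to write $\mu$ as a sum of $s_d-1$ independent copies $\mu^{(1)},\ldots,\mu^{(s_d-1)}$ of $\mathrm{Pois}(u/(s_d-1),W^*)$ and building each layer $A^{(s)}(r,R)$ from the fresh measure $\mu^{(s)}$; see \eqref{eq:defarr1}--\eqref{eq:defarr2} and Lemma~\ref{l:interlacement}. (The paper also sidesteps your logarithmic corrections by treating $d=3,4$ separately via Lawler's intersection theorem, Remark~\ref{rem:d34}, so the capacity induction is only needed in $d\geq 5$.) The further decomposition into $\mu^{(s)}_{r_k,r_{k+1}}$ in Lemma~\ref{l:hitting1}, together with Borel's generalization of Borel--Cantelli, is what supplies the ``independence across shells'' you invoke informally.

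Your lower bound takes a genuinely different route and has a real gap. You want to show an interlacement trajectory misses $V_{s_d-2}(w)$ with positive probability and appeal to ergodicity, but the capacity bound $\mathrm{cap}(V_{s_d-2}(w)\cap B(0,R))\lesssim R^{2(s_d-1)}$ controls only the part of the obstacle inside $B(0,R)$. A doubly-infinite trajectory must avoid the entire unbounded set $V_{s_d-2}(w)$, and your sketch does not address the tail; making this work would require a multi-scale estimate on the capacity of $V_{s_d-2}(w)$ in each dyadic annulus together with a careful handling of the dependence between the obstacle set and the candidate trajectory, both of which are built from the same $\mu$. The paper's lower bound is cleaner and avoids this: it assumes for contradiction that the event $E=\{\rho(v,w)\leq s_d-1\ \forall v,w\in V\}$ has probability $\geq\delta$, and uses the Slivnyak--Mecke formula (Lemma~\ref{l:xyE}) together with the convolution bound of Lemma~\ref{l:RWIntersections} to show $\mathbb{P}(\{x,y\in\mathcal{I}\}\cap E)\leq C|x-y|^{-1}\to 0$, which contradicts the positive density of $\mathcal{I}$. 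This is an estimate on a chain of pairwise trajectory intersections, not a pathwise avoidance construction, and it requires no ergodicity.
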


\begin{remark}\label{rem:d34}
In dimensions $3$ and $4$, the result is a trivial consequence of Theorem~2.6 in \cite{Lawler34} (see also remark at the bottom of page 661 in \cite{Lawler34}) which states that two independent random walks in dimension $3$ or $4$ intersect infinitely often with probability $1$. 
Therefore, it remains to prove the theorem for $d\geq 5$. 
\end{remark}

The structure of the proof of Theorem~\ref{thm:diameter} can be non-rigorously summarized as follows:
first we pick one of the doubly infinite trajectories from $\mathrm{Supp}(\mu)$. 
Denote by $\mathcal{A}^{(1)}$ the set of vertices of $\Z^d$ visited by this trajectory. 
The second layer $\mathcal{A}^{(2)}$ consists of the vertices visited by those trajectories of $\mathrm{Supp}(\mu)$ that intersect 
$\mathcal{A}^{(1)}$, and recursively let $\mathcal{A}^{(s)}$ denote the set of vertices visited by the trajectories that intersect 
$\mathcal{A}^{(s-1)}$. We prove that $\P(\mathrm{diam}(G) = s_d)=1$ by showing that, almost surely, $\mathcal{A}^{(s_d)} \neq \mathcal{I}$ and 
$\mathcal{A}^{(s_d+1)} = \mathcal{I}$.

Let us recall the following well-known fact (see, e.g., Proposition~2.3 in \cite{Lawler34}): For $d\geq 3$, 
the probability that a simple random walk from $0$ hits $x$ is comparable with $\min(1,|x|^{2-d})$. 
We will use this fact and the following elementary lemma to show that $\mathcal{A}^{(s_d)} \neq \mathcal I$.
\begin{lemma}\label{l:RWIntersections}
There exists a finite constant $C=C(d)$ such that for any positive integer $n$ and for any $z_0,z_{n+1}\in\Z^d$, 
\[
\sum_{z_1,\ldots,z_n\in\Z^d} \prod_{i=0}^n \min\left(1,|z_i - z_{i+1}|^{2-d}\right)
~~\begin{cases} \leq C|z_0-z_{n+1}|^{2n+2-d} &\mbox{if } n< s_d,\\
= \infty&\mbox{otherwise.}
\end{cases}
\]
\end{lemma}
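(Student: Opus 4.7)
The approach is induction on $n$, driven by a single classical convolution estimate. The auxiliary fact I would establish first is that for any $0 < a, b < d$ with $a + b > d$, there exists a constant $C = C(a, b, d)$ such that
\[
\sum_{z \in \Z^d} \min(1, |x - z|^{-a}) \, \min(1, |z - y|^{-b}) \leq C (1 + |x - y|)^{d - a - b}
\]
for all $x, y \in \Z^d$. This is proved by splitting $\Z^d$ into the three regions $\{|x - z| \leq |x - y|/2\}$, $\{|z - y| \leq |x - y|/2\}$ and the complement (where both distances are $\geq |x - y|/2$); on each region the two min-factors combine into a clean power and a routine counting argument yields the claim.

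For the finite bound in the regime $n < s_d$, one can use Remark~\ref{rem:d34} to restrict to $d \geq 5$ and induct on $n$. Writing $f_n(z_0, z_{n+1})$ for the sum on the left-hand side, the base case $n = 1$ is the auxiliary estimate with $a = b = d - 2$; the hypothesis $a + b = 2d - 4 > d$ is exactly $d > 4$. For the inductive step, decompose
\[
f_n(z_0, z_{n+1}) = \sum_{z_1 \in \Z^d} \min(1, |z_0 - z_1|^{2-d}) \, f_{n-1}(z_1, z_{n+1}),
\]
and plug in $f_{n-1}(z_1, z_{n+1}) \leq C(1 + |z_1 - z_{n+1}|)^{2n - d}$ from the inductive hypothesis. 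Since $n < s_d$ forces $2n - d < 0$, this bound is comparable to $\min(1, |z_1 - z_{n+1}|^{-(d - 2n)})$, and a second application of the auxiliary estimate with $a = d - 2$, $b = d - 2n$ — the hypothesis $a + b > d$ reads $n < (d - 2)/2$, which is implied by $n < s_d$ — produces the target exponent $d - a - b = 2n + 2 - d$.

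For the divergence claim when $n \geq s_d$, it suffices to prove it for $n = s_d$: if $n > s_d$, setting $z_{s_d + 1} = \cdots = z_n = z_{n+1}$ makes each extra factor equal to $\min(1, 0^{2 - d}) = 1$, so the sum is at least $f_{s_d}(z_0, z_{n+1})$. To show $f_{s_d}(z_0, z_{n+1}) = \infty$, for every dyadic $R$ larger than $|z_0|$ and $|z_{n+1}|$, I would restrict the sum to tuples $(z_1, \ldots, z_{s_d})$ with $R \leq |z_i| \leq 2R$ for every $i$. On such a configuration each of the $s_d + 1$ factors $\min(1, |z_i - z_{i+1}|^{2 - d})$ is at least $c R^{2 - d}$, and the number of allowed tuples is $\asymp R^{d\, s_d}$, so the per-shell contribution is $\geq c R^{2 s_d + 2 - d}$. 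Since $s_d = \lceil (d - 2)/2 \rceil \geq (d - 2)/2$, this exponent is nonnegative, so summing the contributions of disjoint dyadic shells $R = 2^k$ yields $\infty$.

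The bulk of the work sits in the auxiliary convolution estimate, which is routine but must be executed carefully. Beyond that, the only delicate point is the arithmetic bookkeeping that forces the inductive exponent to land on $2n + 2 - d$ and that arranges $a + b > d$ precisely when $n < s_d$; the borderline case $n = s_d$ is then automatically the one at which divergence turns on.
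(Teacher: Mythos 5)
The paper does not supply its own proof of Lemma~\ref{l:RWIntersections}; it refers the reader to (1.38) of Proposition~1.7 in \cite{HHS}, so there is no in-text argument to compare against. Your proof is correct, complete, and is essentially the standard one behind such convolution bounds. The auxiliary inequality you isolate (for $0<a,b<d$ with $a+b>d$, the discrete convolution of $\min(1,|\cdot|^{-a})$ and $\min(1,|\cdot|^{-b})$ is $O\bigl((1+|x-y|)^{d-a-b}\bigr)$, proved by the three-region split you describe) is the right workhorse, and your bookkeeping checks out in both directions: in the inductive step with $a=d-2$ and $b=d-2n$, the constraint $a+b>d$ reads $n<(d-2)/2$, which is indeed implied by $n<s_d=\lceil(d-2)/2\rceil$ for either parity of $d$, and the output exponent $d-a-b=2n+2-d$ is the target; for divergence, the per-shell exponent $2s_d+2-d$ equals $0$ for $d$ even and $1$ for $d$ odd, so it never turns off, and reducing $n>s_d$ to $n=s_d$ by collapsing the tail variables to $z_{n+1}$ is valid since $\min(1,0^{2-d})=1$. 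Two small points of hygiene. First, the appeal to Remark~\ref{rem:d34} to restrict to $d\geq 5$ is misplaced: that remark concerns Theorem~\ref{thm:diameter}, not this purely analytic lemma. The correct and simpler observation is that for $d\in\{3,4\}$ one has $s_d=1$, so no positive integer $n$ satisfies $n<s_d$ and the finite-bound clause is vacuous; only divergence needs checking there, and your shell argument covers it. Second, you should record the one-line passage from $(1+|z_0-z_{n+1}|)^{2n+2-d}$ to the stated $|z_0-z_{n+1}|^{2n+2-d}$: since $2n+2-d<0$ and distinct lattice points are at distance at least $1$, the former is bounded by the latter when $z_0\neq z_{n+1}$, and the statement is trivial when $z_0=z_{n+1}$ because the right-hand side is then infinite.
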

(See, e.g. (1.38) of Proposition~1.7 in \cite{HHS} for a proof of Lemma~\ref{l:RWIntersections}.) 
Lemma~\ref{l:RWIntersections} gives bounds on $n$-fold convolutions of the probability that a random walk from $z_0$ ever visits $z_{n+1}$. 
We will see that $\mathbb P(0,x\in \mathcal A^{(s)})$ can be estimated as a $(s-1)$-fold convolution of such hitting probabilities, and, therefore, 
we will conclude from Lemma~\ref{l:RWIntersections} that $\mathbb P(0,x\in \mathcal A^{(s)}) \leq C|x|^{2s-d}$. 
In particular, $\mathbb P(0,x\in \mathcal A^{(s_d)})\to 0$ as $|x|\to\infty$.  
This contradicts $\mathcal A^{(s_d)} = \mathcal I$, since $\mathcal I$ has positive density. 

In order to show that $\mathcal{A}^{(s_d+1)}=\mathcal I$, we argue as follows. 
Heuristically, $\mathcal{A}^{(s)}$ is a $2s$-dimensional object as long as $2s<d$.  
The capacity of $\mathcal{A}^{(s)}$ intersected with a ball of radius $R$ (see \eqref{eq:defcap} for the definition of the capacity)
is comparable to $R^{2s}$ as long as $2s \leq d-2$. 
The set $\mathcal{A}^{(s_d)}$ already saturates the ball in terms of capacity, 
thus it is visible for an independent random walk started somewhere inside the ball of radius $R$. 
We apply a variant of Wiener's test (see, e.g., Proposition~2.4 in \cite{Lawler34}) to show that any random walk hits $\mathcal{A}^{(s_d)}$ almost surely. 

This is the general strategy of the proof. 
Instead of following it directly, we benefit from property (4) of $\mathrm{Pois}(u,W^*)$ by decomposing $\mu$ into a sum of $s_d$ i.i.d point measures $\mu^{(s)}$ with distribution $\mathrm{Pois}(u/s_d,W^*)$ and constructing each $\mathcal A^{(s)}$ from the ``new'' measure $\mu^{(s)}$. 

The paper is organized as follows. In Section~\ref{sec:notation} we collect most of the notation and facts used in the paper. 
The most important of those are the definitions and properties of the Green function and the capacity. 
We prove the lower bound of Theorem~\ref{thm:diameter} in Section~\ref{sec:lowerbound}, and the upper bound in Section~\ref{sec:upperbound}. 
The structure of the proof of the upper bound of Theorem~\ref{thm:diameter} is given at the beginning of Section~\ref{sec:upperbound}.

\section{Notation and facts about Green function and capacity}\label{sec:notation}
\noindent

In this section we collect most of the notation, definitions and facts used in the paper. 
For $a\in\R$, we write $|a|$ for the absolute value of $a$, $\lfloor a\rfloor$ for the integer part of $a$, and $\lceil a\rceil$ for the smallest integer not less than $a$. 
For $x\in \Z^d$, we write $|x|$ for $\max\left(|x_1|,\ldots,|x_d|\right)$. For a set $S$, we write $|S|$ for the cardinality of $S$. For $R>0$ and $x\in\Z^d$, let $B(x,R) = \{y\in\Z^d~:~|x-y|\leq R\}$  be the ball of radius $R$ centered at $x$. 
We denote by $I(A)$ the indicator of event $A$, and by $E[X;A]$ the expected value of random variable $XI(A)$. 
Throughout the text, we write $c$ and $C$ for small positive and large finite constants, respectively, that may depend on $d$ and $u$. 
Their values may change from place to place. 

For $x\in\Z^d$, let $P_x$ be the law of a simple random walk $X$ on $\Z^d$ with $X(0) = x$. 
We write $g(\cdot,\cdot)$ for the Green function of the walk: 
\[
g(x,y) = \sum_{t=0}^\infty P_x(X(t) = y),~~x,y\in\Z^d .\
\] 
We also write $g(\cdot)$ for $g(0,\cdot)$. The Green function is symmetric and, by translation invariance, $g(x,y) = g(y-x)$.   
It follows from \cite[Theorem~1.5.4]{LawlerRW} that for any $d\geq 3$ there exist a positive constant $c_g=c_g(d)$ and a finite constant $C_g = C_g(d)$ such that 
for all $x$ and $y$ in $\Z^d$, 
\begin{equation}\label{eq:gfbounds}
c_g\min\left(1, |x-y|^{2-d}\right)\leq g(x,y)\leq C_g\min\left(1, |x-y|^{2-d}\right) .\
\end{equation}
\begin{definition}
Let $K$ be a subset of $\Z^d$. The energy of a finite Borel measure $\nu$ on $K$ is 
\[
\mathcal E(\nu) = \int_K\int_K g(x,y) d\nu(x) d\nu(y) = \sum_{x,y\in K} g(x,y)\nu(x)\nu(y) .\
\]
The capacity of $K$ is 
\begin{equation}\label{eq:defcap}
\mathrm{cap}(K) = \left[\inf_{\nu} \mathcal E(\nu)\right]^{-1} ,\
\end{equation}
where the infimum is over probability measures $\nu$ on $K$. (We assume that $\infty^{-1} = 0$, i.e. the capacity of the empty set is $0$.)
\end{definition}
The following properties of the capacity immediately follow from \eqref{eq:defcap}: 
\begin{eqnarray}
\mbox{Monotonicity:}
&&\mbox{for any}~K_1\subset K_2\subset \Z^d, ~~\mathrm{cap}(K_1)\leq \mathrm{cap}(K_2) ;\label{cap:monotonicity}\\
\mbox{Subadditivity:}
&&\mbox{for any}~K_1, K_2\subset \Z^d, ~~\mathrm{cap}(K_1\cup K_2) \leq \mathrm{cap}(K_1)+\mathrm{cap}(K_2) ;\label{cap:subadditivity}\\
\mbox{Capacity of a point:}
&&\mbox{for any}~x\in\Z^d, ~~\mathrm{cap}(\{x\}) = 1/g(0) .\label{cap:point}
\end{eqnarray}
It will be useful to have an alternative definition of the capacity in $d\geq 3$. 
\begin{definition}
Let $K$ be a finite subset of $\Z^d$. The equilibrium measure of $K$ is defined by 
\begin{equation}\label{eq:defeqm}
e_K(x) = P_x\left(X(t)\notin K~\mbox{for all}~t\geq 1\right) I(x\in K),~~x\in\Z^d .\
\end{equation}
The capacity of $K$ is then equal to the total mass of the equilibrium measure of $K$:

\[
\mathrm{cap}(K) = \sum_x e_K(x) ,\
\] 
and the unique minimizer of the variational problem \eqref{eq:defcap} is given by 
the normalized equilibrium measure 
\begin{equation}\label{eq:defneqm}
\widetilde e_K(x) = e_K(x)/\mathrm{cap}(K) .\
\end{equation}
(See, e.g., Lemma~2.3 in \cite{JainOrey} for a proof of this fact.)
\end{definition}
As a simple corollary of the above definition, we get for $d\geq 3$, 
\begin{equation}\label{eq:hittingformula}
P_x\left(H(K) < \infty\right) = \sum_{y\in K} g(x,y) e_K(y),~~\mbox{for}~x\in\Z^d .\
\end{equation}
Here, we write $H(K)$ for the first entrance time in $K$, i.e. $H(K) = \inf\{t\geq 0~:~X(t)\in K\}$. 
We will repeatedly use the following bound on the capacity of $B(0,R)$ in $d\geq 3$ (see (2.16) on page 53 in \cite{LawlerRW}): 
There exist constants $c_b = c_b(d)>0$ and $C_b = C_b(d)<\infty$ such that for all positive $R$, 
\begin{equation}\label{eq:capball}
c_b R^{d-2}\leq \mathrm{cap}\left(B(0,R)\right) \leq C_b R^{d-2} .\
\end{equation}

\section{Proof of Theorem~\ref{thm:diameter}: lower bound on the diameter}\label{sec:lowerbound}
\noindent

Remember the definition of $s_d$ in \eqref{def:sd}. 
In this section we prove that $\mathbb P(\mathrm{diam}(G)\geq s_d) = 1$. 
Since, almost surely, $\mathrm{diam}(G)\geq 1$, we only need to consider the case $d\geq 5$. 
For two trajectories $v$ and $w$ in $V$, we write $\rho(v,w)$ for the distance between $v$ and $w$ in $G$.  
In order to prove that the probability of the event $\{\mathrm{diam}(G) \geq s_d\}$ is $1$, 
we assume by contradiction that this probability is $\leq 1-\delta$, for some positive $\delta$.  
In other words, the probability of event 
\[
E = \left\{\rho(v,w)\leq s_d-1~\mbox{for all}~v,w\in V\right\}
\]
is bounded from below by $\delta$. 

For $x,y\in\Z^d$, we denote by $S(x,y)$ the subset of doubly-infinite trajectories in $W^*$ that intersect both vertices $x$ and $y$. 
Remember the definition \eqref{eq:definterlacement} of the random interlacement $\mathcal I$. 
The next lemma gives an estimate on the probability that $E$ occurs and two different vertices $x$ and $y$ of $\Z^d$ are in $\mathcal I$: 
\begin{lemma}\label{l:xyE}
For any $x,y\in\Z^d$, 
\begin{equation}\label{eq:xyE}
\mathbb P(\{x,y\in\mathcal I\}\cap E) \leq
\sum_{n=0}^{s_d-1}\sum_{z_1,\ldots,z_n\in\Z^d}
\prod_{i=0}^n \mathbb E\left[\mu\left(S(z_i,z_{i+1})\right)\right] ,\
\end{equation}
where we take $z_0 = x$ and $z_{n+1} = y$. 
\end{lemma}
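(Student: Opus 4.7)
My plan is to reduce the event $\{x,y\in\mathcal I\}\cap E$ to the existence of a short chain of distinct trajectories in $\mathrm{Supp}(\mu)$ connecting $x$ to $y$ in $G$, and then estimate the probability of this existence via the factorial moment of the Poisson point process $\mu$.

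First I would unpack the combinatorial content of $E$. On $\{x,y\in\mathcal I\}\cap E$, the definition of $\mathcal I$ lets us pick some $w_0\in\mathrm{Supp}(\mu)$ visiting $x$ and some $w_{\mathrm{end}}\in\mathrm{Supp}(\mu)$ visiting $y$; these are either equal (in which case we set $n=0$) or joined by a shortest path $w_0\sim w_1\sim\cdots\sim w_n=w_{\mathrm{end}}$ in $G$ of length $1\leq n\leq s_d-1$ with the $w_i$ pairwise distinct. For each $i=1,\ldots,n$ the consecutive trajectories $w_{i-1}$ and $w_i$ share at least one vertex in $\Z^d$; I pick one such witness and call it $z_i$. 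Setting $z_0=x$ and $z_{n+1}=y$, I then have $w_i\in S(z_i,z_{i+1})$ for every $i\in\{0,\ldots,n\}$.

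A union bound over the length $n\in\{0,\ldots,s_d-1\}$ and over the intermediate vertices $(z_1,\ldots,z_n)\in(\Z^d)^n$ reduces the task to bounding, for each fixed $(n;z_1,\ldots,z_n)$, the probability of the event
\[
A_{n;z_1,\ldots,z_n}=\bigl\{\exists\ \text{pairwise distinct}\ w_0,\ldots,w_n\in\mathrm{Supp}(\mu)\ \text{with}\ w_i\in S(z_i,z_{i+1})\ \forall i\bigr\}.
\]
I would bound the indicator of $A_{n;z_1,\ldots,z_n}$ by the number of ordered $(n+1)$-tuples of pairwise distinct atoms of $\mu$ satisfying the constraints (Markov), and then invoke the factorial moment (Campbell) formula for Poisson point processes: the expected number of such distinct ordered tuples equals $\prod_{i=0}^n\nu(S(z_i,z_{i+1}))$, where $\nu$ denotes the intensity measure of $\mu$, which is precisely $\prod_{i=0}^n\E[\mu(S(z_i,z_{i+1}))]$. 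Summing yields the right-hand side of \eqref{eq:xyE}.

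The one subtlety I want to flag is the insistence on distinctness of $w_0,\ldots,w_n$: the sets $S(z_i,z_{i+1})$ are generally not disjoint, so the counts $\mu(S(z_i,z_{i+1}))$ are typically positively correlated, and the naive estimate $\prod_i\P(\mu(S(z_i,z_{i+1}))\geq 1)\leq\prod_i\E[\mu(S(z_i,z_{i+1}))]$ without independence would be unjustified. Working with ordered tuples of distinct atoms and applying the factorial moment formula is precisely what produces the clean product structure on the right-hand side. The reduction to distinct $w_i$ is harmless because a shortest path in $G$ never revisits a vertex; I expect this to be the only point requiring care, with the rest being a direct union bound followed by a standard Poisson computation.
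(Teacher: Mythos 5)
Your proposal is correct and follows essentially the same route as the paper: a union bound over the length $n\le s_d-1$ of a shortest path in $G$ and over intermediate witness vertices $z_1,\ldots,z_n$, then a Markov bound replacing the probability of existence of pairwise distinct $w_0,\ldots,w_n\in\mathrm{Supp}(\mu)$ with $w_i\in S(z_i,z_{i+1})$ by the expected number of such ordered tuples, and finally the Mecke--Slivnyak identity (what you call the factorial-moment/Campbell formula) to factor that expectation into $\prod_i\mathbb E[\mu(S(z_i,z_{i+1}))]$. You also correctly flag the key subtlety---that distinctness of the $w_i$ (guaranteed by taking a shortest path) is exactly what makes the factorial-moment factorization legitimate despite the $S(z_i,z_{i+1})$ overlapping---which is the same role the disjoint-occurrence notation $D\circ D\circ\cdots$ plays in the paper's argument.
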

We postpone the proof of Lemma~\ref{l:xyE} until the end of this section. 
Each of the expectations $\mathbb E\left[\mu\left(S(z_i,z_{i+1})\right)\right]$ in \eqref{eq:xyE} is bounded from above by $2 u g(z_i,z_{i+1})$. 
(This follows, for example, from (1.33) in \cite{SznitmanPTRF} applied to $K = \{z_i\}$ and $K' = \{z_{i+1}\}$.)
Therefore, we obtain 
\[
\mathbb P(\{x,y\in\mathcal I\}\cap E) \leq 
\sum_{n=0}^{s_d-1}(2u)^{n+1} \sum_{z_1,\ldots,z_n\in\Z^d}\prod_{i=0}^n g(z_i,z_{i+1}) ,\
\]
where we again assume $z_0 = x$ and $z_{n+1} = y$.
Recall from \eqref{eq:gfbounds} that $g(x,y)\leq C_g\min(1,|x-y|^{2-d})$. 
Therefore, by Lemma~\ref{l:RWIntersections}, 
\[
\sum_{n=0}^{s_d-1}\sum_{z_1,\ldots,z_n\in\Z^d}\prod_{i=0}^n g(z_i,z_{i+1}) \leq C|z_0-z_{n+1}|^{2s_d - d} 
\leq C |z_0-z_{n+1}|^{-1} .\
\]
In particular, $\mathbb P(\{x,y\in\mathcal I\}\cap E) \leq C|x-y|^{-1} \to 0$, as $|x-y|\to\infty$. 
By property (1) of $\mathrm{Pois}(u,W^*)$, for any $R>0$, 
\[
\mathbb P\left(\mathcal I\cap B(0,R)\neq\emptyset\right) 
= 
\mathbb P\left(N_{B(0,R)}\geq 1\right)
=
1-e^{-u\mathrm{cap}(B(0,R))} .\
\]
By \eqref{eq:capball}, we can take $R$ big enough so that 
\[
\mathbb P\left(\mathcal I\cap B(0,R)\neq\emptyset\right) \geq 1 - \frac{\delta}{3} .\
\]
With this choice of $R$, for any $z\in\Z^d$, we obtain
\[
\mathbb P\left(\{\mathcal I\cap B(0,R)\neq\emptyset\}\cap\{\mathcal I\cap B(z,R)\neq\emptyset\}\cap E\right) 
\geq \mathbb P(E) - 2\mathbb P\left(\mathcal I\cap B(0,R) = \emptyset\right)
\geq \delta/3 .\
\]
On the other hand, for $z\in\Z^d$ with $|z|>3R$, 
\[
\mathbb P\left(\{\mathcal I\cap B(0,R)\neq\emptyset\}\cap\{\mathcal I\cap B(z,R)\neq\emptyset\}\cap E\right) 
\leq \sum_{x\in B(0,R)}\sum_{y\in B(z,R)} \mathbb P(\{x,y\in\mathcal I\}\cap E)
\leq C R^{2d} |z|^{-1} ,\
\]
which tends to $0$ as $|z|$ tends to infinity. 
This is a contradiction, and we conclude that $\mathbb P$-a.s. the diameter of $G$ is at least $s_d$. 
\qed

\begin{proof}[Proof of Lemma~\ref{l:xyE}]
One can deduce the result almost immediately from the Palm theory for general Poisson point processes (see, e.g. Chapter~13.1 in \cite{DVJ}). 
Remember the definition of the set $S(x,y)$ given before the statement of Lemma~\ref{l:xyE}. 
Let $D(x,y)$ be the event that $S(x,y)\cap\mathrm{Supp}(\mu) \neq \emptyset$. In other words, $D(x,y) = \{\mu(S(x,y)) \neq 0\}$. 
For $x,y,x',y'\in\Z^d$, we write $D(x,y)\circ D(x',y')$ for the event that there exist different trajectories $w$ and $w'$ in $\mathrm{Supp}(\mu)$ such that 
$w\in S(x,y)$ and $w'\in S(x',y')$. 
Let ${\sum}^*$ be the sum over all $(n+1)$-tuples of pairwise different doubly-infinite trajectories modulo time-shift $w_0,\ldots,w_n\in\mathrm{Supp}(\mu)$. We have 
\begin{eqnarray*}
\mathbb P(\{x,y\in\mathcal I\}\cap E) 
&\leq 
&\sum_{n=0}^{s_d-1}\sum_{z_1,\ldots,z_n\in\Z^d}
\mathbb P\left(D(z_0,z_1)\circ\ldots\circ D(z_n,z_{n+1})\right) \\
&\leq 
&\sum_{n=0}^{s_d-1}\sum_{z_1,\ldots,z_n\in\Z^d}
\mathbb E \left[{\sum}^* \prod_{i=0}^n I(w_i\in S(z_i,z_{i+1}))\right] ,\
\end{eqnarray*}
where we take $z_0 = x$ and $z_{n+1} = y$. 
The result then follows from the Slivnyak-Mecke theorem (See, e.g. Theorem~3.3 in \cite{MW}, where it is proved for point processes in $\R^d$, and 
Chapter~13.1 in \cite{DVJ} for the theory of Palm distributions in general spaces.): 
\[
\mathbb E \left[{\sum}^* \prod_{i=0}^n I(w_i\in S(z_i,z_{i+1}))\right]
=
\prod_{i=0}^n \mathbb E\left[\mu\left(S(z_i,z_{i+1})\right)\right] .\
\]
\end{proof}

\section{Proof of Theorem~\ref{thm:diameter}: upper bound on the diameter}\label{sec:upperbound}
\noindent

The proof of the upper bound on the diameter of $G$ in Theorem~\ref{thm:diameter} is organized as follows. 
Section~\ref{sec:capacity} contains preliminary lemmas. 
Lemma~\ref{l:rwcap} gives some bounds on the expected capacity of a certain family of traces of random walks. 
Lemma~\ref{l:intcap} provides bounds on the expected capacity of a set of vertices visited by trajectories from $\mathrm{Supp}(\mu)$ that intersect a given set of vertices. 
Both lemmas state that the capacity of such sets of vertices is either comparable with the volume of the set (when trajectories are ``well spread-out'') or 
with the capacity of the ball that contains the set (when the set is ``dense'' in the ball). 
In Lemma~\ref{l:intrcap}, we show that the exclusion of a (small) number of trajectories from $\mathrm{Supp}(\mu)$ that visit a certain ball does not decrease too much the capacity of sets in Lemma~\ref{l:intcap}. This step is needed to benefit from property (5) of $\mathrm{Pois}(u,W^*)$ and create some additional independence. 

In Section~\ref{sec:visible} we use these bounds on the capacity to construct certain subsets of $\mathrm{Supp}(\mu)$ (see \eqref{eq:defarr1} and \eqref{eq:defarr2}) that are visible by an independent random walk started near the origin. 

In Section~\ref{sec:recurrent} we construct a sequence of almost independent visible subsets of $\mathrm{Supp}(\mu)$ and use ideas similar in spirit to Wiener's test to show that, almost surely, infinitely many of these sets are visited by an independent random walk. This is done in Lemma~\ref{l:hitting1}.

We finish Section~\ref{sec:recurrent} by completing the proof of Theorem~\ref{thm:diameter}.

\subsection{Bounds on the capacity of certain collection of random walk trajectories}\label{sec:capacity}
\noindent

\begin{lemma}\label{l:gfestimate}
Let $d\geq 5$. Let $(x_i)_{i\geq 1}$ be a sequence in $\Z^d$, and let $X_i$ be a sequence of independent simple random walks on $\Z^d$ with $X_i(0) = x_i$. 
Then for all positive integers $N$ and $n$ and for all $(x_i)_{i\geq 1}$, we have 
\begin{equation}\label{eq:gfestimate}
{\mathbf E}\left[\sum_{i,j=1}^N \sum_{s,t=n+1}^{2n} g\left(X_i(s),X_j(t)\right)\right]
\leq
C\left(Nn + N^2 n^{3-d/2}\right) .\
\end{equation}
\end{lemma}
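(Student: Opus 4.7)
The approach is to split the double sum into its diagonal piece ($i = j$) and its off-diagonal piece ($i \neq j$); these will account for the $Nn$ and $N^2 n^{3 - d/2}$ terms respectively. The only analytic input needed is the local central limit theorem bound $p_m(x, y) \leq C m^{-d/2}$ on the $m$-step transition kernel of the simple random walk, together with its tail form $\sum_{m \geq M} p_m(x, y) \leq C M^{1 - d/2}$. The hypothesis $d \geq 5$ enters precisely to ensure $1 - d/2 \leq -3/2 < -1$, so that this tail quantity is still summable over one more index uniformly in the upper limit.

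For the diagonal term, writing $g$ as $\sum_m p_m$ and using the Markov property together with symmetry and translation invariance of $g$, I would reduce for $s \leq t$ to
\[
\mathbf{E}[g(X_i(s), X_i(t))] \;=\; \sum_{m \geq t - s} p_m(0, 0) \;\leq\; C(t - s + 1)^{1 - d/2},
\]
uniformly in the starting point $x_i$. Because $d \geq 5$, summing $(t - s + 1)^{1 - d/2}$ over $t \in \{n+1, \ldots, 2n\}$ is bounded uniformly in $s$ and $n$; a further sum over $s$ produces $Cn$ per walk, and a final sum over $i$ gives the $CNn$ contribution.

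For the off-diagonal term, independence of $X_i$ and $X_j$ combined with two applications of Chapman-Kolmogorov collapses the expectation into
\[
\mathbf{E}[g(X_i(s), X_j(t))] \;=\; \sum_{m \geq 0} p_{s + t + m}(x_i, x_j) \;\leq\; C (s + t)^{1 - d/2} \;\leq\; C n^{1 - d/2},
\]
uniformly in $x_i, x_j$. Multiplying by the $n^2$ choices of $(s, t) \in \{n+1, \ldots, 2n\}^2$ and by the at most $N^2$ ordered pairs $(i, j)$ with $i \neq j$ yields exactly $C N^2 n^{3 - d/2}$. The only step that requires a little care is the off-diagonal identity $\mathbf{E}[g(X_i(s), X_j(t))] = \sum_{m \geq s + t} p_m(x_i, x_j)$, which relies on symmetry of $g$ and independence of the two walks; everything else is straightforward bookkeeping driven by the tail estimate, with $d \geq 5$ entering only through finiteness of $\sum_{k \geq 1} k^{1 - d/2}$.
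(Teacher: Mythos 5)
Your proof is correct and follows essentially the same route as the paper: split the sum into the diagonal ($i=j$) and off-diagonal ($i\neq j$) parts, reduce each expectation to a tail of the return/transition kernel via Chapman--Kolmogorov, and invoke $p_m(x,y)\le Cm^{-d/2}$ together with summability of $k^{1-d/2}$ for $d\ge 5$. The paper packages the key estimate as a single lemma ${\mathbf E}\,g(X(s),y)\le Cs^{1-d/2}$ and applies it in both cases, but the content is identical to what you do.
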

\begin{proof}
Let $X$ be a simple random walk with $X(0) = 0$, then for all $y\in \Z^d$ and for all positive integers $s$, 
\begin{equation}\label{eq:gfsup}
{\mathbf E} g\left(X(s),y\right) \leq C s^{1-d/2} .\ 
\end{equation}
Indeed, by the Markov property, 
\[
{\mathbf E} g\left(X(s),y\right) 
= 
\sum_{t = s}^\infty {\mathbf P} \left(X(t) = y\right)
\leq 
C \sum_{t = s}^\infty t^{-d/2}
\leq 
C s^{1-d/2} .\
\]
Here we used the fact that \cite[Proposition~7.6]{Spitzer}
\[
\sup_{y\in\Z^d} {\mathbf P} \left(X(t) = y\right) \leq C t^{-d/2} .\
\]
In order to prove \eqref{eq:gfestimate}, we consider separately the cases $i=j$ and $i\neq j$. 
In the first case, the Markov property and the fact that $g(x,y)=g(x-y)$ imply 
\begin{eqnarray*}
{\mathbf E}\left[\sum_{i=1}^N \sum_{s,t=n+1}^{2n} g\left(X_i(s),X_i(t)\right)\right]
&=
&N {\mathbf E}\left[\sum_{s,t=n+1}^{2n} g\left(X(|s-t|)\right)\right]\\
&\stackrel{\eqref{eq:gfsup}}\leq 
&C N n \left(1 + \sum_{s=1}^n s^{1 - d/2}\right) 
\stackrel{(d\geq 5)}\leq 
C N n .\
\end{eqnarray*}
In the case $i\neq j$, an application of \eqref{eq:gfsup} gives 
\[
{\mathbf E}\left[\sum_{s,t=n+1}^{2n} g\left(X_i(s),X_j(t)\right)\right]
\leq
n^2 C n^{1-d/2} .\
\]
This completes the proof. 
\end{proof}

Let $(X_i(t)~:~t\geq 0)_{i\geq 1}$ be a sequence of nearest-neighbor trajectories on $\Z^d$, and 
$\overline X_N = (X_1,\ldots,X_N)$.  
For positive integers $N$ and $R$, we define the subset $\Phi(\overline X_N, R)$ of $\Z^d$ by 
\begin{equation}\label{def:Phi}
\Phi(\overline X_N, R) 
=
\bigcup_{i=1}^N \left(\left\{X_i(t)~:~1\leq t\leq R^2/2\right\}\cap B(X_i(0),R)\right) .\
\end{equation}

\begin{lemma}\label{l:rwcap}
Let $X_i$ be a sequence of independent simple random walks on $\Z^d$ with $X_i(0) = x_i$. 
There exists a positive constant $c$ such that for any sequence $(x_i)_{i\geq 1}\subset\Z^d$ and for all positive integers $N$ and $R$, 
\begin{equation}\label{eq:rwcapupper}
\mathrm{cap}\left(\Phi(\overline X_N, R)\right) \leq \frac{NR^2}{2g(0)} ,\
\end{equation}
and for $d\geq 5$, 
\begin{equation}\label{eq:rwcaplower}
\mathbf E \mathrm{cap}\left(\Phi(\overline X_N, R)\right) \geq c \min\left(N R^2, R^{d-2}\right) .\
\end{equation}
\end{lemma}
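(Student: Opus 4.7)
The upper bound \eqref{eq:rwcapupper} is essentially deterministic: each of the $N$ random walks visits at most $R^2/2$ vertices in the time window $1 \le t \le R^2/2$, so $|\Phi(\overline X_N, R)| \le NR^2/2$. Combining subadditivity \eqref{cap:subadditivity} with \eqref{cap:point} gives $\mathrm{cap}(\Phi(\overline X_N, R)) \le |\Phi(\overline X_N, R)|/g(0) \le NR^2/(2g(0))$.

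For the lower bound \eqref{eq:rwcaplower}, the plan is to apply the variational formula \eqref{eq:defcap} to a well-chosen random measure supported on $\Phi(\overline X_N, R)$. Set $n = \lfloor R^2/4 \rfloor$ and define the occupation-time measure
\[
L(y) = \sum_{i=1}^N \sum_{t=n+1}^{2n} I(X_i(t) = y) \cdot I(X_i(t) \in B(x_i, R)), \qquad y \in \Z^d,
\]
which is supported on $\Phi(\overline X_N, R)$ since $2n \le R^2/2$ and the second indicator enforces the ball condition. Write $m = L(\Z^d)$ for its total mass and $\mathcal E(L) = \sum_{y,z} g(y,z) L(y) L(z)$ for its energy. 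The variational characterization \eqref{eq:defcap} applied to the probability measure $L/m$ gives $\mathrm{cap}(\Phi(\overline X_N, R)) \ge m^2/\mathcal E(L)$ on $\{m > 0\}$ (and the bound is trivial when $m = 0$), while Cauchy--Schwarz yields
\[
\mathbf E\, \mathrm{cap}(\Phi(\overline X_N, R)) \;\ge\; \frac{(\mathbf E m)^2}{\mathbf E\, \mathcal E(L)}.
\]

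It remains to lower bound $\mathbf E m$ and upper bound $\mathbf E\, \mathcal E(L)$. A diffusive estimate (Chebyshev applied to $\mathbf E|X_i(t) - x_i|^2 = dt$) gives $P(X_i(t) \in B(x_i, R)) \ge c$ uniformly for $t \le 2n \le R^2/2$, hence $\mathbf E m \ge c N n \ge c N R^2$ (the case of very small $R$ being trivial). Dropping the indicators and expanding, $\mathcal E(L) \le \sum_{i,j=1}^N \sum_{s,t=n+1}^{2n} g(X_i(s), X_j(t))$, so Lemma~\ref{l:gfestimate} yields $\mathbf E\, \mathcal E(L) \le C(N n + N^2 n^{3 - d/2}) \le C(N R^2 + N^2 R^{6-d})$. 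Putting these together, $\mathbf E\, \mathrm{cap}(\Phi(\overline X_N, R)) \ge c (N R^2)^2 / (N R^2 + N^2 R^{6-d})$, and a case split on whether $N \le R^{d-4}$ or $N \ge R^{d-4}$ shows this ratio is at least $c \min(N R^2, R^{d-2})$: in the first regime the $N R^2$ term in the denominator dominates and the ratio is at least $c N R^2$, while in the second regime the $N^2 R^{6-d}$ term dominates and the ratio is at least $c R^{d-2}$.

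The main obstacle is the energy control for $L$, which is precisely Lemma~\ref{l:gfestimate}; the restriction $d \ge 5$ enters here to ensure that the off-diagonal ($i \ne j$) contribution to the energy is summable in time, so that the second-moment method delivers a lower bound of the correct order. Once that estimate is in hand, the rest of the argument is a routine application of the variational principle for capacity and Cauchy--Schwarz.
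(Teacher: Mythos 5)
Your proof is correct and follows essentially the same approach as the paper: a second-moment argument combining the variational characterization \eqref{eq:defcap} of capacity, the Cauchy--Schwarz inequality, and the energy estimate of Lemma~\ref{l:gfestimate}, with the same choice of time window $[n+1,2n]$, $n=\lfloor R^2/4\rfloor$. The one technical difference is that you enforce the ball constraint with a per-time indicator $I(X_i(t)\in B(x_i,R))$ inside an unnormalized occupation measure and a single-time Chebyshev bound, whereas the paper normalizes over the random set $J$ of walks that stay in $B(x_i,R)$ throughout the window (controlled via Kolmogorov's maximal inequality) and conditions on the event $\{|J|\ge N/4\}$; your version is slightly leaner but amounts to the same mechanism, and the final case split on $N\lessgtr R^{d-4}$ matches the paper's conclusion.
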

\begin{proof}
The upper bound on the capacity of $\Phi(\overline X_N, R)$ follows from properties \eqref{cap:subadditivity} and \eqref{cap:point}, and 
the fact that the number of vertices in $\Phi(\overline X_N, R)$ is at most $NR^2/2$. 

We proceed with the lower bound on $\mathbf E \mathrm{cap}\left(\Phi(\overline X_N, R)\right)$. 
The following inequality follows from Kolmogorov's maximal inequality applied coordinatewise: 
For each $\lambda > 0$ and $n\geq 1$, 
\begin{equation}\label{eq:kolmogorov}
\mathbf P \left(\max_{1\leq t \leq n} |X(t)| \geq \lambda \right) \leq \frac{n}{\lambda^2} .\
\end{equation}
Take positive integers $N$ and $R$, random walks $X_1,\ldots, X_N$ with $X_i(0) = x_i$, and set 
\begin{equation}\label{eq:ndef}
n = \lfloor R^2/4\rfloor .\ 
\end{equation}
We define the random subset $J$ of $\{1,\ldots N\}$ by
\[
J = \{i~:~\sup_{1\leq t\leq 2n} |X_i(t) - x_i| \leq R\} .\
\]
We also consider the event $A = \{|J| \geq N/4\}$. 
It follows from (\ref{eq:kolmogorov}) that 
\[
\mathbf E |J| \geq N \left(1 - \frac{2n}{R^2}\right) \geq \frac{N}{2} .\
\]
Since $|J| \leq N$, we get $\mathbf P \left(A\right) \geq \frac{1}{3}$. 

By the definition \eqref{eq:defcap} of the capacity of $\Phi(\overline X_N, R)$, we have 
\[
\mathbf E \mathrm{cap}\left(\Phi(\overline X_N, R)\right)
\geq 
\mathbf E \left[\mathcal E(\nu)^{-1}\right]
\geq
\mathbf E \left[\mathcal E(\nu)^{-1}; A\right] ,\
\]
where $\nu$ stands for the probability measure 
\[
\nu(x) = \frac{1}{|J| n}\sum_{i\in J}\sum_{t=n+1}^{2n}I(X_i(t) = x), ~~x\in\Z^d .\
\]
The energy of $\nu$ equals 
\[
\mathcal E(\nu) = \frac{1}{|J|^2 n^2}\sum_{i,j\in J}\sum_{s,t=n+1}^{2n} g(X_i(s),X_j(t)) .\
\]
Therefore, in order to prove the lower bound on $\mathbf E \mathrm{cap}\left(\Phi(\overline X_N, R)\right)$, 
it suffices to show that 
\[
\mathbf E \left[\left(\frac{1}{|J|^2 n^2}\sum_{i,j\in J}\sum_{s,t=n+1}^{2n} g(X_i(s),X_j(t))\right)^{-1}; A\right]
\geq 
c \min\left(N R^2, R^{d-2}\right) .\
\]
By the Cauchy-Schwarz inequality and the definition of the event $A$, we get 
\[
\mathbf E \left[\left(\frac{1}{|J|^2 n^2}\sum_{i,j\in J}\sum_{s,t=n+1}^{2n} g(X_i(s),X_j(t))\right)^{-1}; A\right]
\geq 
(N/4)^2 n^2 \mathbf P(A)^2 
\left(\mathbf E\left[\sum_{i,j\in J}\sum_{s,t=n+1}^{2n} g(X_i(s),X_j(t)); A\right]\right)^{-1} .\
\]
Since $J$ is a subset of $\{1,\ldots,N\}$, the right-hand side is bounded from below by 
\begin{eqnarray*}
(N/4)^2 n^2 \mathbf P(A)^2 
\left(\mathbf E\left[\sum_{i,j=1}^N \sum_{s,t=n+1}^{2n} g(X_i(s),X_j(t))\right]\right)^{-1} 
&\stackrel{\eqref{eq:gfestimate}}\geq
&\frac{N^2n^2}{144 C(Nn + N^2 n^{3-d/2})}\\
&\stackrel{\eqref{eq:ndef}}\geq 
&c \min\left(N R^2, R^{d-2}\right) .\
\end{eqnarray*}
This completes the proof. 
\end{proof}

Let $A$ be a finite set of vertices in $\Z^d$. For a point measure $\omega = \sum_{i\geq 0}\delta_{w_i}$ with $w_i\in W^*$, 
we denote by $N_A(\omega)$ the number of trajectories from $\mathrm{Supp}(\omega)$ that intersect $A$. 
(In particular, for a point measure $\mu$ with distribution $\mathrm{Pois}(u,W^*)$, we have $N_A = N_A(\mu)$.) 
Let $X_1,\ldots,X_{N_A(\omega)}$ be these trajectories parametrized in such a way that 
$X_i(0) \in A$ and $X_i(t) \notin A$ for all $t<0$ and for all $i\in\{1,\ldots,N_A(\omega)\}$. 
We write $\overline X_A(\omega)$ for $(X_1,\ldots,X_{N_A(\omega)})$. 
We also define $\Psi(\omega,A,R)$ as $\Phi(\overline X_A(\omega),R)$, i.e., 
\begin{equation}\label{def:Psi}
\Psi(\omega,A,R) \stackrel{\eqref{def:Phi}}= \Phi(\overline X_A(\omega),R) = \bigcup_{i=1}^{N_A(\omega)} \left(\left\{X_i(t)~:~1\leq t\leq R^2/2\right\}\cap B(X_i(0),R)\right) .\
\end{equation}

\begin{lemma}\label{l:intcap}
Let $d\geq 5$. Let $\mu$ be a Poisson point measure with distribution $\mathrm{Pois}(u,W^*)$, then 
for all finite subsets $A$ of $\Z^d$ and for all positive $R$, one has
\[
\mathbb E \mathrm{cap}(\Psi(\mu,A,R)) \geq c~ \min\left(u\mathrm{cap}(A) R^2, R^{d-2}\right) .\
\]
\end{lemma}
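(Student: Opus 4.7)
The plan is to reduce Lemma~\ref{l:intcap} to Lemma~\ref{l:rwcap} by conditioning on the Poisson data, and then to handle the Poisson averaging. Set $\lambda := u\,\mathrm{cap}(A)$. By property~(1) of $\mathrm{Pois}(u,W^*)$, $N_A$ is Poisson with parameter $\lambda$; by property~(3), conditionally on $N_A$ and on the entry points $(X_i(0))_{i=1}^{N_A}$, the forward paths $(X_i(t),\,t\geq 0)_{i=1}^{N_A}$ are i.i.d.\ simple random walks started from the entry points. Since $\Psi(\mu,A,R)$ depends only on these forward parts (compare \eqref{def:Phi} and \eqref{def:Psi}), its conditional law coincides with the law of $\Phi(\overline X_{N_A},R)$ with deterministic starting points. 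Applying Lemma~\ref{l:rwcap} conditionally and then integrating yields
\[
\mathbb E\,\mathrm{cap}\bigl(\Psi(\mu,A,R)\bigr) \;\geq\; c\,\mathbb E\bigl[\min(N_A R^2,\,R^{d-2})\bigr] .\
\]

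It remains to prove the Poisson averaging bound
\[
\mathbb E\bigl[\min(N_A R^2,\,R^{d-2})\bigr] \;\geq\; c'\,\min(\lambda R^2,\,R^{d-2}) ,\
\]
which, setting $m := \lfloor R^{d-4}\rfloor \vee 1$, reduces to showing $\mathbb E[\min(N_A,m)] \geq c'\min(\lambda,m)$ for $N_A \sim \mathrm{Poisson}(\lambda)$. I would verify this by a short case analysis. For $\lambda \leq 1$ one has $\min(\lambda,m) = \lambda$ and $\mathbb E[\min(N_A,m)] \geq P(N_A\geq 1) = 1-e^{-\lambda} \geq (1-e^{-1})\lambda$. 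For $\lambda > 1$ one uses the standard fact that the median of a Poisson random variable with mean $\lambda\geq 1$ lies within bounded distance of $\lambda$, so $P(N_A \geq \lfloor\lambda\rfloor) \geq c_0$ for an absolute constant $c_0 > 0$; then if $\lambda \leq m$ one estimates $\mathbb E[\min(N_A,m)] \geq \lfloor\lambda\rfloor\cdot P(N_A\geq\lfloor\lambda\rfloor) \geq c_0\lambda/2$, and if $\lambda > m$ one estimates $\mathbb E[\min(N_A,m)] \geq m\cdot P(N_A\geq m) \geq c_0 m$. In either branch the right-hand side matches $\min(\lambda,m)$ up to a constant.

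The main obstacle is not conceptual but bookkeeping: the substantive estimate was already obtained in Lemma~\ref{l:rwcap} by a second-moment argument on the Green function, and the present step merely propagates that bound through the Poissonization of the trajectory count via property~(3). The one minor subtlety is that the function $x\mapsto \min(xR^2,R^{d-2})$ is concave, so Jensen's inequality goes the wrong way; one must invoke the concentration of a Poisson random variable near its mean (in the weak form of a median estimate) to recover a matching lower bound for the expectation.
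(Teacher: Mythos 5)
Your argument follows the paper's proof essentially step for step: condition on $N_A$ and the entry points via properties (2)--(3), apply Lemma~\ref{l:rwcap} conditionally to obtain $\mathbb E\,\mathrm{cap}(\Psi(\mu,A,R)) \geq c\,\mathbb E[\min(N_A R^2, R^{d-2})]$, then bound this Poisson expectation from below by splitting on the size of $\lambda = u\,\mathrm{cap}(A)$. The only deviation is in the large-$\lambda$ branch, where you invoke the Poisson median concentration fact, while the paper instead applies the Paley--Zygmund inequality \eqref{eq:PZ} to $N_A$ (using $\mathbb E[N_A^2]=\lambda^2+\lambda$) to get $\mathbb P(N_A\geq\lambda/2)\geq 1/12$; the paper's choice is slightly more self-contained and avoids the integer-rounding bookkeeping around $m$ and $\lfloor\lambda\rfloor$ that your version needs, but both routes are correct.
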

\begin{proof}
Let $\lambda = u\mathrm{cap}(A)$. 
Properties (2) and (3) of $\mathrm{Pois}(u,W^*)$ and Lemma~\ref{l:rwcap} imply that 
\[
\mathbb E \mathrm{cap}(\Psi(\mu,A,R)) \geq c~ \mathbb E \min\left(N_A R^2, R^{d-2}\right) .\
\]
Property (1) of $\mathrm{Pois}(u,W^*)$ implies that $\mathbb E N_A = \lambda$, $\mathbb E [N_A^2] = \lambda^2 + \lambda$, and $\mathbb P(N_A = 0) = \exp(-\lambda)$. 
If $\lambda\leq 1/2$, we estimate
\[
\mathbb E \min\left(N_A R^2, R^{d-2}\right)
\geq
R^2 \mathbb P(N_A\geq 1) = R^2(1-e^{-\lambda}) \geq R^2\lambda/2 .\
\]
If $\lambda\geq 1/2$, we write 
\[
\mathbb E \min\left(N_A R^2, R^{d-2}\right)
\geq
\min\left(\frac{R^2\lambda}{2},R^{d-2}\right) \mathbb P\left(N_A\geq \frac{\lambda}{2}\right) .\
\]
Remember the Paley-Zygmund inequality \cite{PZ}: 
Let $\xi$ be a non-negative random variable with finite second moment. For any $\theta\in(0,1)$, 
\begin{equation}\label{eq:PZ}
\mathrm P(\xi\geq \theta \mathrm E\xi) \geq (1-\theta)^2\frac{\left[\mathrm E \xi\right]^2}{\mathrm E[\xi^2]} .\
\end{equation}
An application of \eqref{eq:PZ} to $N_A$ gives 
\[
\mathbb P\left(N_A\geq \frac{\lambda}{2}\right) 
\geq \frac{1}{4} \frac{\lambda^2}{\lambda^2+\lambda} 
\geq \frac{1}{12} .\
\]
This completes the proof. 
\end{proof}

\begin{definition}\label{def:RIrestricted}
For positive integers $r$ and $R$ with $r < R$, and a point measure $\omega = \sum_{i\geq 0}\delta_{w_i}$ with $w_i\in W^*$, 
we write $\omega_r$ for the restriction of $\omega$ to the set of trajectories that intersect $B(r)$, 
$\omega_{r,\infty}$ for the restriction of $\omega$ to the set of trajectories that do not intersect $B(r)$, and  
$\omega_{r,R}$ for the restriction of $\omega$ to the set of trajectories that intersect $B(R)$ but do not intersect $B(r)$.
By property (5) of $\mathrm{Pois}(u,W^*)$, the measures $\mu_r$ and $\mu_{r,R}$ are independent for any $r>0$ and $R\in (r,\infty]$. 
Moreover, for any $r>0$, we have $\mu = \mu_r + \mu_{r,\infty}$. 
\end{definition}

\begin{lemma}\label{l:intrcap}
Let $d\geq 5$. 
For all finite subsets $A$ of $\Z^d$ and for all positive integers $r$ and $R$ with $r<R$, 
\[
\mathbb E \mathrm{cap}(\Psi(\mu_{r,\infty},A,R)) \geq c~ \min\left(u\mathrm{cap}(A) R^2, R^{d-2}\right) - Cur^{d-2}R^2 .\
\]
\end{lemma}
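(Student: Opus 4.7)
My plan is to reduce the statement to Lemma~\ref{l:intcap} by a capacity-subadditivity argument, treating $\mu_r$ as a small perturbation whose contribution we can control deterministically.

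First I would observe that every trajectory in the support of $\mu$ that contributes to $\Psi(\mu,A,R)$ is either in $\mu_r$ (if it also visits $B(r)$) or in $\mu_{r,\infty}$ (if it does not). Since the parametrizations in \eqref{def:Psi} depend only on the trajectory and on $A$, this splitting gives the pointwise identity
\[
\Psi(\mu,A,R) = \Psi(\mu_r,A,R)\cup \Psi(\mu_{r,\infty},A,R).
\]
Applying the subadditivity of capacity \eqref{cap:subadditivity} and rearranging yields
\[
\mathrm{cap}(\Psi(\mu_{r,\infty},A,R)) \geq \mathrm{cap}(\Psi(\mu,A,R)) - \mathrm{cap}(\Psi(\mu_r,A,R)).
\]

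Next I would take expectations and bound the two terms on the right separately. The first term is handled directly by Lemma~\ref{l:intcap}, giving $\mathbb E \mathrm{cap}(\Psi(\mu,A,R)) \geq c\min(u\mathrm{cap}(A)R^2, R^{d-2})$. For the second term, I would use the deterministic upper bound \eqref{eq:rwcapupper} of Lemma~\ref{l:rwcap}, which says $\mathrm{cap}(\Psi(\mu_r,A,R)) \leq N_A(\mu_r) R^2/(2g(0))$. Since every trajectory contributing to $\mu_r$ intersects $B(r)$ by definition, $N_A(\mu_r)\leq N_{B(r)}(\mu)$, and property (1) of $\mathrm{Pois}(u,W^*)$ together with the capacity estimate \eqref{eq:capball} gives
\[
\mathbb E N_{B(r)} = u\,\mathrm{cap}(B(r)) \leq C u r^{d-2}.
\]
Combining yields $\mathbb E \mathrm{cap}(\Psi(\mu_r,A,R)) \leq C u r^{d-2} R^2$, which is the subtracted term in the claimed bound.

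Honestly, there is no serious obstacle here; the only thing to be careful about is the bookkeeping in the first step. One must verify that the set-theoretic decomposition of $\Psi(\mu,A,R)$ into $\Psi(\mu_r,A,R)\cup \Psi(\mu_{r,\infty},A,R)$ is valid with the same parametrization on both sides, i.e.\ that classifying a trajectory according to whether it intersects $B(r)$ is independent of the re-parametrization forced by $A$ in the definition of $\Psi$. Once that is noted, the rest is a direct application of Lemma~\ref{l:intcap}, Lemma~\ref{l:rwcap}, and \eqref{eq:capball}.
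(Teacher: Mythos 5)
Your proof is correct and matches the paper's argument essentially line for line: decompose via $\mu=\mu_r+\mu_{r,\infty}$, apply subadditivity of capacity, lower-bound $\mathbb{E}\,\mathrm{cap}(\Psi(\mu,A,R))$ by Lemma~\ref{l:intcap}, and upper-bound $\mathbb{E}\,\mathrm{cap}(\Psi(\mu_r,A,R))$ via \eqref{eq:rwcapupper}, $N_A(\mu_r)\leq N_{B(r)}$, property~(1), and \eqref{eq:capball}. The parametrization point you flag is indeed fine, since in \eqref{def:Psi} the reparametrization is determined only by the trajectory and the set $A$, not by which sub-measure the trajectory came from.
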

\begin{proof}
By the subadditivity of the capacity and the fact that $\mu = \mu_r + \mu_{r,\infty}$, 
\[
\mathbb E \mathrm{cap}(\Psi(\mu_{r,\infty},A,R))
\geq
\mathbb E \mathrm{cap}(\Psi(\mu,A,R)) - 
\mathbb E \mathrm{cap}(\Psi(\mu_r,A,R)) .\
\]
We use Lemma~\ref{l:intcap} to bound $\mathbb E \mathrm{cap}(\Psi(\mu,A,R))$ from below. 
As for an upper bound on $\mathbb E \mathrm{cap}(\Psi(\mu_r,A,R))$, 
note that $|\mathrm{Supp}(\mu_r)| = \mu_r(W^*) = N_{B(r)}(\mu_r) = N_{B(r)}$. 
Therefore, by Lemma~\ref{l:rwcap}, 
\[
\mathbb E \mathrm{cap}(\Psi(\mu_r,A,R)) 
\leq 
\frac{R^2 \mathbb E N_{B(r)}}{2g(0)}  = \frac{R^2u\mathrm{cap}(B(r))}{2g(0)} \stackrel{\eqref{eq:capball}}\leq Cur^{d-2}R^2 .\
\]
\end{proof}

\subsection{Construction of visible sets}\label{sec:visible}
\noindent

Let $X$ be a simple random walk on $\mathbb Z^d$ with $X(0) = x$. 
We denote the corresponding probability measure and the expectation by $P_x$ and $E_x$,  respectively. 
Let $\mu^{(2)}, \mu^{(3)}, \ldots$ be independent random point measures with distribution $\mathrm{Pois}(u,W^*)$ (The parameter $u$ is fixed here.), 
which are also independent of $X$. The corresponding probability measures and expectations are denoted by $P^{(2)}, P^{(3)}, \ldots$ and 
$E^{(2)}, E^{(3)}, \ldots$, respectively. 
For $s\geq 1$, we write $\mathbb P^{(s)}_x$ for $P_x\otimes P^{(2)}\otimes \ldots \otimes P^{(s)}$. 

Let $r$ and $R$ be positive integers with $r<R$ and $|x| < R$. 
Let $T_{B(R)}$ be the first exit time of $X$ from $B(R)$, i.e., $T_{B(R)} = \inf\{t\geq 0~:~X(t)\notin B(R)\}$. 
We denote by $Y$ the random walk $X(T_{B(R)} + \cdot)$. 
We define the following sequence of random subsets of $\Z^d$: 
\begin{equation}\label{eq:defarr1}
A^{(1)}(r,R) = A^{(1)}(R) = \Phi (Y,R)
\stackrel{\eqref{def:Phi}} = \left\{Y(t)~:~1\leq t\leq R^2/2\right\}\cap B(Y(0),R) ,\ 
\end{equation}
and for $s\geq 2$ (see \eqref{def:Psi} for notation), 
\begin{equation}\label{eq:defarr2}
A^{(s)}(r,R) = \Psi \left(\mu^{(s)}_{r,\infty}, A^{(s-1)}(r,R), R\right)
= \Psi \left(\mu^{(s)}_{r,sR}, A^{(s-1)}(r,R), R\right) ,\
\end{equation}
where the last equality follows from the fact that $A^{(s-1)}(r,R)$ is a subset of $B(sR)$ by construction. 
\begin{remark}\label{rem:distance}
Note that for each $y\in A^{(s)}(r,R)$, there exist
doubly-infinite trajectories $w_i\in\mathrm{Supp}(\mu^{(i)}_{r,\infty})$, $2\leq i\leq s$, 
such that (1) the vertex $y$ is visited by $w_s$, 
(2) the random walk $X$ intersects $w_2$, and 
(3) for all $i\in\{2,\ldots,s-1\}$, the trajectories $w_i$ and $w_{i+1}$ intersect.
\end{remark}
\begin{lemma}\label{l:aupper}
Let $s$ be a positive integer. 
There exist finite constants $C_s = C(u,d,s)$ such that for all positive integers $r$ and $R$ with $r<R$ and for all $x\in B(R)$, 
\begin{equation}\label{eq:aupper1}
\mathbb E^{(s)}_x \mathrm{cap}\left(A^{(s)}(r,R)\right) \leq C_s R^{\min(d-2,2s)} ,\
\end{equation}
and
\begin{equation}\label{eq:aupper2}
\mathbb E^{(s)}_x \left[\mathrm{cap}\left(A^{(s)}(r,R)\right)^2\right] \leq C_s R^{2\min(d-2,2s)} .\
\end{equation}
\end{lemma}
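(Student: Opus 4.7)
The plan is to prove both bounds simultaneously by induction on $s$. The base case $s=1$ is deterministic: by the pathwise upper bound \eqref{eq:rwcapupper} of Lemma~\ref{l:rwcap} applied with $N=1$, one has $\mathrm{cap}(A^{(1)}(R)) \leq R^2/(2g(0))$; since $d\geq 5$, the exponent $\min(d-2,2)=2$, so \eqref{eq:aupper1} holds, and \eqref{eq:aupper2} follows by squaring.

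For the inductive step, I would condition on $\mathcal F_{s-1} := \sigma(X, \mu^{(2)}, \ldots, \mu^{(s-1)})$. Then $A^{(s-1)}(r,R)$ is $\mathcal F_{s-1}$-measurable and, by the independence of the $\mu^{(i)}$, the measure $\mu^{(s)}_{r,\infty}$ is independent of $\mathcal F_{s-1}$. Applying \eqref{eq:rwcapupper} to the $N := N_{A^{(s-1)}(r,R)}(\mu^{(s)}_{r,\infty})$ relevant trajectories gives the pathwise estimate
\[
\mathrm{cap}\bigl(A^{(s)}(r,R)\bigr) \leq \frac{R^2}{2g(0)}\,N.
\]
Since $N \leq N_{A^{(s-1)}(r,R)}(\mu^{(s)})$, and the latter is, conditionally on $\mathcal F_{s-1}$, a Poisson random variable with parameter $u\,\mathrm{cap}(A^{(s-1)}(r,R))$ by property~(1), I get
\[
\mathbb E\bigl[N \mid \mathcal F_{s-1}\bigr] \leq u\,\mathrm{cap}(A^{(s-1)}), \qquad
\mathbb E\bigl[N^2 \mid \mathcal F_{s-1}\bigr] \leq u^2\,\mathrm{cap}(A^{(s-1)})^2 + u\,\mathrm{cap}(A^{(s-1)}).
\]
Taking expectations and inserting the inductive hypothesis yields
\[
\mathbb E^{(s)}_x\,\mathrm{cap}(A^{(s)}) \leq C\,R^{2+\min(d-2,\,2s-2)}, \qquad
\mathbb E^{(s)}_x\,\mathrm{cap}(A^{(s)})^2 \leq C\,R^{4+2\min(d-2,\,2s-2)}.
\]

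These recursive estimates already yield the claimed exponents when $2s\leq d-2$ (so that $\min(d-2,2s-2)=2s-2$, giving $R^{2s}$ and $R^{4s}=R^{2\min(d-2,2s)}$), but overshoot when $2s>d-2$. To cover that regime I would use a deterministic ball bound: a straightforward induction using \eqref{eq:defarr1}--\eqref{eq:defarr2} shows that $A^{(s)}(r,R)\subset B((s+1)R+1)$, since each layer adds trajectories of range at most $R$ around starting points already contained in $A^{(s-1)}$. Then monotonicity of capacity together with \eqref{eq:capball} yields $\mathrm{cap}(A^{(s)}(r,R))\leq C_b((s+1)R+1)^{d-2}\leq C(s)\,R^{d-2}$ deterministically, which is enough when $2s>d-2$. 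Taking the minimum of the recursive and the deterministic bound closes the induction with the required exponents $\min(d-2,2s)$ and $2\min(d-2,2s)$. The only non-routine point is verifying the independence of $\mu^{(s)}_{r,\infty}$ from $\mathcal F_{s-1}$ that justifies the conditional Poisson moment computation; once that is in place, everything reduces to bookkeeping on the two-case recurrence.
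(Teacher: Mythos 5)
Your proof is correct and follows essentially the same route as the paper's: induction on $s$, the pathwise bound \eqref{eq:rwcapupper} to reduce $\mathrm{cap}(A^{(s)})$ to the number of trajectories hitting $A^{(s-1)}$, the conditional Poisson moments of $N_{A^{(s-1)}}(\mu^{(s)})$, and the deterministic containment $A^{(s)}\subset B((s+1)R)$ together with \eqref{eq:capball} to cap the exponent at $d-2$. The only cosmetic difference is that the paper dispenses with the $R^{d-2}$ ceiling at the outset (so the induction only needs to track the $R^{2s}$ rate), while you fold it in at the end by taking a minimum; both organizations close the induction in the same way.
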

\begin{proof}
We fix $r$ and $R$ throughout the proof, and we write $A^{(s)}$ for $A^{(s)}(r,R)$. 
Since $A^{(s)}$ is a subset of $B((s+1)R)$, the monotonicity of the capacity implies that  
\[
\mathrm{cap}\left(A^{(s)}\right) \leq \mathrm{cap}\left(B((s+1)R)\right) \stackrel{\eqref{eq:capball}}\leq C_s R^{d-2} .\
\]
Therefore, it suffices to show that the first and the second moments of $\mathrm{cap}\left(A^{(s)}\right)$ are bounded from above by 
$C_sR^{2s}$ and $C_sR^{4s}$, respectively. 
It follows from \eqref{eq:rwcapupper} that 
\[
\mathbb E^{(s)}_x \mathrm{cap}\left(A^{(s)}\right)
\leq 
\frac{R^2}{2g(0)} \mathbb E^{(s)}_x N_{A^{(s-1)}}(\mu^{(s)}_{r,\infty})
\leq
\frac{R^2}{2g(0)} \mathbb E^{(s)}_x N_{A^{(s-1)}}(\mu^{(s)}) .\
\]
Remember that $N_{A^{(s-1)}}(\mu^{(s)})$ is a Poisson random variable with parameter $u\mathrm{cap}\left(A^{(s-1)}\right)$,  
therefore, we have  
\[
\mathbb E^{(s)}_x \mathrm{cap}\left(A^{(s)}\right)
\leq 
\frac{R^2}{2g(0)} \mathbb E^{(s-1)}_x u\mathrm{cap}\left(A^{(s-1)}\right) .\
\]
The bound on the first moment of $\mathrm{cap}\left(A^{(s)}\right)$ follows by induction. 
The bound on the second moment of $\mathrm{cap}\left(A^{(s)}\right)$ is also obtained using \eqref{eq:rwcapupper}. 
In a similar fashion as above, we obtain the relations:  
\[
\mathbb E^{(s)}_x \left[\mathrm{cap}\left(A^{(s)}\right)^2\right]
\leq 
\frac{R^4}{4g(0)^2} \mathbb E^{(s)}_x \left[N_{A^{(s-1)}}(\mu^{(s)}_{r,\infty})^2\right] ,\
\] 
and
\begin{eqnarray*}
\mathbb E^{(s)}_x \left[N_{A^{(s-1)}}(\mu^{(s)}_{r,\infty})^2\right]
&\leq 
&\mathbb E^{(s)}_x \left[N_{A^{(s-1)}}(\mu^{(s)})^2\right]\\
&=
&\mathbb E^{(s-1)}_x\left[u^2\mathrm{cap}\left(A^{(s-1)}\right)^2\right] + 
\mathbb E^{(s-1)}_x u\mathrm{cap}\left(A^{(s-1)}\right) .\
\end{eqnarray*}
The bound on the second moment of $\mathrm{cap}\left(A^{(s)}\right)$ follows from these inequalities and from the first statement of the lemma. 
\end{proof}

\begin{lemma}\label{l:capAlb}
Let $d\geq 5$. Let $s$ be a positive integer. 
There exist positive constants $c_s = c(u,d,s)$ and $\varepsilon=\varepsilon(u,d,s)$ such that for all positive integers $r$ and $R$ with
\begin{equation}\label{eq:rR1}
r^{d-2} \leq \varepsilon R
\end{equation} 
and for all $x\in B(R)$, 
\begin{equation}\label{eq:capAlb}
\mathbb E^{(s)}_x \mathrm{cap}\left(A^{(s)}(r,R)\right) \geq c_s R^{\min(d-2,2s)} .\
\end{equation}
\end{lemma}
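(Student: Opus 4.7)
The idea is to induct on $s$, using Lemma~\ref{l:intrcap} at each step to pass from $A^{(s-1)}$ to $A^{(s)}$, and controlling the random capacity $\mathrm{cap}(A^{(s-1)})$ via the second moment bound of Lemma~\ref{l:aupper}.

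For the base case $s=1$, the set $A^{(1)}(R) = \Phi(Y,R)$, where by the strong Markov property $Y = X(T_{B(R)}+\cdot)$ is a simple random walk. The lower bound \eqref{eq:rwcaplower} of Lemma~\ref{l:rwcap} applied with $N=1$ (and translation invariance of the capacity to remove the dependence on $Y(0)$) gives
\[
\mathbb E^{(1)}_x \mathrm{cap}(A^{(1)}(R)) \geq c \min(R^2, R^{d-2}) = c R^{\min(d-2,2)},
\]
since $d\geq 5$. No constraint on $r$ is needed here.

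For the inductive step, fix $s\geq 2$ and assume \eqref{eq:capAlb} holds at level $s-1$ with constant $c_{s-1}$. Since $\mu^{(s)}$ is independent of the data defining $A^{(s-1)}(r,R)$, one can condition on $A^{(s-1)}(r,R)$, apply Lemma~\ref{l:intrcap} to the (now deterministic) set $A = A^{(s-1)}(r,R)$ and integrate to get
\[
\mathbb E^{(s)}_x \mathrm{cap}(A^{(s)}(r,R)) \geq c\, \mathbb E^{(s-1)}_x \min\bigl(u\,\mathrm{cap}(A^{(s-1)}(r,R))\,R^2,\, R^{d-2}\bigr) - C u r^{d-2} R^2.
\]
Write $Z = \mathrm{cap}(A^{(s-1)}(r,R))$ and $\alpha = \min(d-2,2(s-1))$. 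The induction hypothesis gives $\mathbb E Z \geq c_{s-1} R^\alpha$, while Lemma~\ref{l:aupper} supplies $\mathbb E Z^2 \leq C_{s-1} R^{2\alpha}$. The Paley-Zygmund inequality \eqref{eq:PZ} with $\theta = 1/2$ then shows that $Z \geq \tfrac{c_{s-1}}{2} R^\alpha$ holds with probability bounded below by a positive constant independent of $r,R$.

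On this good event, the key identity $\alpha + 2 = \min(d, 2s)$, combined with a short case analysis (splitting on whether $2s \leq d-2$ or $2s \geq d-2$), yields $\min(u Z R^2, R^{d-2}) \geq c' R^{\min(d-2, 2s)}$; hence the expectation on the right side is at least $c'' R^{\min(d-2, 2s)}$. Finally, since $s \geq 2$ and $d \geq 5$ imply $\min(d-2, 2s) \geq 3$, the error $C u r^{d-2} R^2$ is at most $C u \varepsilon R^3 \leq C u \varepsilon R^{\min(d-2, 2s)}$ under \eqref{eq:rR1}, and can be absorbed into the main term by choosing $\varepsilon = \varepsilon(u,d,s)$ small enough. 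The main technical obstacle is precisely this bookkeeping: the error term from Lemma~\ref{l:intrcap} must be dominated at every induction step, which dictates the form of the hypothesis $r^{d-2} \leq \varepsilon R$ and forces $\varepsilon$ (and $c_s$) to shrink as $s$ grows.
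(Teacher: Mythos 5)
Your proposal is correct and follows essentially the same route as the paper: induction on $s$ with base case from \eqref{eq:rwcaplower}, the Paley--Zygmund inequality \eqref{eq:PZ} applied to $\mathrm{cap}(A^{(s-1)}(r,R))$ using the first-moment lower bound from the induction hypothesis and the second-moment upper bound \eqref{eq:aupper2}, then Lemma~\ref{l:intrcap} with the error term absorbed via the hypothesis $r^{d-2}\leq\varepsilon R$ since $\min(d-2,2s)\geq 3$. Your bookkeeping with $\alpha=\min(d-2,2(s-1))$ and $\alpha+2=\min(d,2s)$ is accurate and matches the paper's computation.
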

\begin{remark}\label{rem:saturation}
Remember that $A^{(s)}(r,R)$ is constructed as a subset of a (random) number of pieces of random walk trajectories of lengths $\lfloor R^2/2\rfloor$. 
The expected capacity of a single random walk is comparable with its length in dimension $\geq 5$, as shown in Lemma~\ref{l:rwcap}. 
Note that $\min(d-2,2s)$ is $2s$ for $s < \lceil (d-2)/2\rceil$ and $d-2$ for $s\geq \lceil (d-2)/2\rceil$. 
One can interpret the results of Lemma~\ref{l:capAlb} as follows. 
If $s\leq \lceil (d-2)/2\rceil$, the random walk pieces that form $A^{(s)}(r,R)$ are well spread-out, so that the expected capacity of $A^{(s)}(r,R)$ is comparable with its volume. 
On the other hand, if $s\geq \lceil (d-2)/2\rceil$, the set $A^{(s)}(r,R)$ saturates the ball $B((s+1)R)$ and its expected capacity becomes comparable with the capacity of the ball, which is of order $R^{d-2}$ by \eqref{eq:capball}.  
\end{remark}
\begin{proof}
We prove \eqref{eq:capAlb} by induction on $s$. 

It follows from \eqref{eq:rwcaplower} that 
\[
E_x \mathrm{cap}\left(A^{(1)}(R)\right) \geq c_1 R^2 .\
\]
Let $s\geq 2$, and assume that the induction hypothesis holds: 
\[
\mathbb E^{(s-1)}_x \mathrm{cap}\left(A^{(s-1)}(r,R)\right) \geq c_{s-1} R^{\min(d-2,2s-2)} .\
\]
With this lower bound on the expected value of $\mathrm{cap}\left(A^{(s-1)}(r,R)\right)$ and the corresponding upper bound 
\eqref{eq:aupper2}, the Paley-Zygmund inequality \eqref{eq:PZ} yields that 
there exists a positive constant $c = c(u,d,s)$ such that 
\begin{equation}\label{eq:acapc}
\mathbb P^{(s-1)}_x \left(\mathrm{cap}\left(A^{(s-1)}(r,R)\right) \geq c R^{\min(d-2,2s-2)}\right) \geq c .\
\end{equation}
Lemma~\ref{l:intrcap} implies that 
\begin{eqnarray*}
\mathbb E^{(s)}_x \mathrm{cap}\left(A^{(s)}(r,R)\right) 
&\geq 
&c~ \mathbb E^{(s-1)}_x \min\left(u\mathrm{cap}(A^{(s-1)}(r,R)) R^2, R^{d-2}\right) - Cur^{d-2}R^2 \\
&\stackrel{\eqref{eq:acapc}}\geq
&c~\min\left(R^{\min(d-2,2s-2)} R^2, R^{d-2}\right) - Cur^{d-2}R^2 \\
&=
&c~R^{\min(d-2,2s)} - Cur^{d-2}R^2\\
&\stackrel{\eqref{eq:rR1}}\geq
&(c/2)~R^{\min(d-2,2s)} .\
\end{eqnarray*}
(The last inequality holds if $\varepsilon$ in \eqref{eq:rR1} is taken small enough, since we only consider $d\geq 5$ and $s\geq 2$.)
\end{proof}

In the next lemma we study the probability that a simple random walk hits $A^{(s)}(r,R)$. 
Remember the definitions of $X$ and $\mu^{(s)}$, $s\geq 2$ at the beginning of Section~\ref{sec:visible}, and $s_d$ in \eqref{def:sd}.  
\begin{lemma}\label{l:hittingc}
Let $d\geq 5$. 
Let $Z$ be a simple random walk on $\Z^d$ with $Z(0) = z$, which is independent of $X$ and $\mu^{(s)}$, $s\geq 2$, with law $P_z$.  
There exist positive constants $c = c(u,d)$, and $\varepsilon =\varepsilon(u,d)>0$ such that, 
for all positive integers $r$ and $R$  with $r^{d-2} \leq \varepsilon R$, $x\in B(R)$, and $z\in B(R)$, we have 
\[
P_z\otimes{\mathbb P}_x^{(s_d)}\left(H(A^{(s_d)}(r,R)) < T_{B(R^2)}\right) \geq c ,\
\]
where $H(A^{(s)}(r,R))$ is the entrance time of $Z$ in $A^{(s)}(r,R)$ and $T_{B(R^2)}$ the exit time of $Z$ from $B(R^2)$. 
\end{lemma}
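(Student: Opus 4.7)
The plan is to combine the capacity lower bound of Lemma~\ref{l:capAlb} with a second-moment argument on the visits of $Z$ to $A^{(s_d)}(r,R)$ before the walk exits $B(R^2)$, weighted by the equilibrium measure of $K := A^{(s_d)}(r,R)$. Since $s_d = \lceil (d-2)/2\rceil$ we have $\min(d-2,2s_d) = d-2$, so Lemma~\ref{l:capAlb} tells us that $K$ has capacity of the same order as the whole ball $B((s_d+1)R)$ in which it lives; heuristically, $K$ already ``saturates'' the scale $R$, which should make it visible from any $z \in B(R)$ long before time $T_{B(R^2)}$.

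\textbf{Step 1: A high-capacity event.} I first apply the Paley-Zygmund inequality \eqref{eq:PZ} to the random variable $\mathrm{cap}(K)$. Lemma~\ref{l:capAlb} (with $s = s_d$), applicable under the hypothesis $r^{d-2} \leq \varepsilon R$, yields $\mathbb E_x^{(s_d)}\,\mathrm{cap}(K) \geq c R^{d-2}$, while Lemma~\ref{l:aupper} gives the matching second-moment bound $\mathbb E_x^{(s_d)}[\mathrm{cap}(K)^2] \leq C R^{2(d-2)}$. Paley-Zygmund therefore produces a constant $\alpha = \alpha(u,d) > 0$ such that the event
\[
F = \bigl\{\mathrm{cap}(K) \geq \alpha R^{d-2}\bigr\}
\]
satisfies $\mathbb P_x^{(s_d)}(F) \geq \alpha$. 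It then remains to show that on $F$, for every $z \in B(R)$, the walk $Z$ hits $K$ before leaving $B(R^2)$ with uniformly positive $P_z$-probability.

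\textbf{Step 2: Hitting $K$ via a second moment for visits.} Work on $F$, with $K$ regarded as a fixed deterministic set, and write $T = T_{B(R^2)}$ and $g_T(\cdot,\cdot)$ for the Green function of $Z$ killed at $T$. I introduce
\[
L_K = \sum_{t=0}^{T-1} e_K(Z(t)),
\]
so that $\{L_K > 0\} \subset \{H(K) < T\}$. For $z \in B(R)$ and $y \in K \subset B((s_d+1)R)$ one has $|z-y| \leq (s_d+2)R$, hence $g(z,y) \geq c R^{2-d}$ by \eqref{eq:gfbounds}; on the other hand the last-exit identity $g_T(z,y) = g(z,y) - E_z[g(Z_T,y)]$ together with $|Z_T - y| \geq R^2/2$ for $R$ larger than a $d$-dependent constant shows that $E_z[g(Z_T,y)] \leq C R^{2(2-d)}$, which is negligible compared with $R^{2-d}$. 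Thus $g_T(z,y) \geq (c/2)R^{2-d}$ uniformly in $y \in K$, and
\[
E_z L_K = \sum_y g_T(z,y) e_K(y) \geq (c/2) R^{2-d}\,\mathrm{cap}(K) \geq c\alpha/2 \quad\text{on } F.
\]
For the second moment, a standard first-visit decomposition gives $E_z[V_{y_1} V_{y_2}] \leq g_T(z,y_1) g_T(y_1,y_2) + g_T(z,y_2) g_T(y_2,y_1)$ for the visit counts $V_y = \#\{t<T: Z(t)=y\}$, so that by \eqref{eq:hittingformula},
\[
E_z[L_K^2] \leq 2\, E_z L_K \cdot \sup_{y_1 \in K} \sum_{y_2} g(y_1,y_2) e_K(y_2) \leq 2\, E_z L_K.
\]
Cauchy-Schwarz then yields $P_z(L_K > 0) \geq (E_z L_K)^2 / E_z[L_K^2] \geq E_z L_K / 2 \geq c\alpha/4$ on $F$.

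\textbf{Combining, and the main obstacle.} Since $Z$ is independent of $(X, \mu^{(2)}, \ldots, \mu^{(s_d)})$, integrating over $F$ yields
\[
P_z \otimes \mathbb P_x^{(s_d)}\bigl(H(K) < T_{B(R^2)}\bigr) \geq \mathbb P_x^{(s_d)}(F) \cdot (c\alpha/4) \geq c'(u,d) > 0,
\]
as required. The step I expect to be the main technical nuisance is the uniform Green-function lower bound $g_T(z,y) \geq c R^{2-d}$ for $z \in B(R)$ and $y \in B((s_d+1)R)$; it relies on $y$ being deep inside the killing ball $B(R^2)$ so that the harmonic correction $E_z[g(Z_T,y)]$ is genuinely of lower order than $R^{2-d}$, which only holds once $R$ exceeds a constant depending on $d$. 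The residual small-$R$ regime can be absorbed into the final constant $c$, for instance by shrinking it so that the lemma's conclusion is vacuously true in a bounded range of $R$.
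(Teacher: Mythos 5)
Your proof is essentially correct, but it takes a genuinely different and somewhat more elaborate route than the paper's. Both arguments lean on the expected capacity lower bound of Lemma~\ref{l:capAlb}, but the paper uses it in the simplest possible way: it takes the $\mathbb E^{(s_d)}_x$-expectation of the hitting formula \eqref{eq:hittingformula} to obtain $P_z\otimes\mathbb P^{(s_d)}_x(H(A)<\infty)\geq c_g(2dR)^{2-d}\,\mathbb E^{(s_d)}_x[\mathrm{cap}(A)]\geq c$ in one stroke, with no Paley--Zygmund step and no conditioning on the random set $A$; it then separately shows $P_z\otimes\mathbb P^{(s_d)}_x(T_{B(R^2)}<H(A)<\infty)\leq\sup_{z'\notin B(R^2)}P_{z'}(H(B(dR))<\infty)\leq CR^{2-d}$ by the strong Markov property and \eqref{eq:capball}, and subtracts. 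You instead first extract a high-capacity event $F$ by Paley--Zygmund, then condition on $K$ and run a second-moment method on the killed occupation functional $L_K=\sum_{t<T}e_K(Z(t))$, obtaining $E_z[L_K^2]\leq 2E_zL_K$ via the first-visit decomposition and concluding with Cauchy--Schwarz. Your Green's function calculations and the bound $\sup_{y_1}\sum_{y_2}g(y_1,y_2)e_K(y_2)\leq 1$ (which is just \eqref{eq:hittingformula}) are correct. The second-moment route has the pedagogical appeal of directly producing the probability of hitting \emph{before} the exit time rather than subtracting a correction term, but it is more work than the paper's first-moment argument, and the Paley--Zygmund conditioning is needed here only because you fix $K$ before running the walk, whereas the paper averages over $K$ inside the hitting formula and avoids this altogether.

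One point where the write-up is actually wrong as stated: to handle the regime where $R$ is not large enough for the Green's function correction $E_z[g(Z_T,y)]$ to be negligible, you propose ``shrinking $c$ so that the lemma's conclusion is vacuously true in a bounded range of $R$.'' Shrinking $c$ cannot make a lower bound on a probability vacuously true --- the probability could be zero. The correct (and available) fix is the one the paper uses: the hypothesis $r^{d-2}\leq\varepsilon R$ with $r\geq 1$ forces $R\geq 1/\varepsilon$, so by choosing $\varepsilon=\varepsilon(u,d)$ small enough you automatically exclude the small-$R$ regime from the lemma's hypotheses. This is a minor slip and does not affect the substance of the argument, but it should be repaired.
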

\begin{proof}
We write $A$ for $A^{(s_d)}(r,R)$ throughout the proof. 
We use the identity (\ref{eq:hittingformula}):  
\[
P_z\left(H(A) < \infty\right) = \sum_{y\in A} g(z,y) e_{A}(y) ,\
\]
where $e_A$ is the equilibrium measure of $A$ (see \eqref{eq:defeqm}). 
We have  
\[
P_z\otimes{\mathbb P}^{(s_d)}_x\left(H(A) < \infty\right) 
=
{\mathbb E}^{(s_d)}_x \left[ \sum_{y\in A} g(z,y) e_{A}(y)\right] .\
\]
Note that $A$ is a subset of $B((s_d+1)R) \subset B(dR)$ by construction. 
Therefore, inequality \eqref{eq:gfbounds} implies that, for any $y\in A$ and $z\in B(R)$, $g(z,y) \geq c_g (2dR)^{2-d}$. 
Also remember that $\sum_{y\in A} e_{A}(y) = \mathrm{cap} (A)$. 
These observations give 
\[
P_z\otimes{\mathbb P}^{(s_d)}_x\left(H(A) < \infty\right) 
\geq
c_g (2dR)^{2-d} {\mathbb E}^{(s_d)}_x\left[\mathrm{cap}(A)\right] .\
\]
It follows from the previous lemma that, for $d\geq 5$, we can choose $\varepsilon >0$ so that 
\[
{\mathbb E}^{(s_d)}_x\left[\mathrm{cap}(A)\right] \geq c R^{\min(d-2,2s_d)} = c R^{d-2} .\
\]
Therefore, 
\[
P_z\otimes{\mathbb P}^{(s_d)}_x\left(H(A) < \infty\right) 
\geq c .\
\]
On the other hand, by the strong Markov property of the random walk $Z$, 
\begin{eqnarray*}
P_z\otimes{\mathbb P}^{(s_d)}_x\left( T_{B(R^2)} < H(A)<\infty\right) 
&\leq 
&\sup_{z'\notin B(R^2)}
P_{z'}\otimes{\mathbb P}^{(s_d)}_x\left(H(A) < \infty\right) \\
&\leq
&\sup_{z'\notin B(R^2)}
P_{z'}\left(H(B(dR)) < \infty\right) .\ 
\end{eqnarray*}
In the second inequality we use the fact that $A$ is a subset of $B(dR)$. 
We bound the right-hand side, using \eqref{eq:gfbounds}, \eqref{eq:hittingformula} and \eqref{eq:capball}:
\[
\sup_{z'\notin B(R^2)} P_{z'}\left(H(B(dR)) < \infty\right)
\leq 
C_g (R^2-dR)^{2-d} \mathrm{cap}\left(B(dR)\right) 
\leq 
CR^{2-d} .\
\]
Remember that $R \geq r^{d-2}/\varepsilon\geq 1/\varepsilon$. 
Therefore, by taking $\varepsilon$ small enough, we get 
\[
\sup_{z'\notin B(R^2)} P_{z'}\left(H(B(dR)) < \infty\right)
\leq 
\frac{1}{2}P_z\otimes{\mathbb P}^{(s)}_x\left(H(A^{(s)}) < \infty\right) .\
\]
The result follows. 
\end{proof}

\subsection{Construction of recurrent sets}\label{sec:recurrent}
\noindent

We will now use the result of Lemma~\ref{l:hittingc} to construct a sequence of subsets $A^{(s_d)}(r_k,R_k)$ of $\Z^d$ such that 
the union of these sets $\cup_k A^{(s_d)}(r_k,R_k)$ is hit by an independent random walk (infinitely often) with probability $1$. 
Remember the definitions of $X$ and $\mu^{(s)}$, $s\geq 2$ at the beginning of Section~\ref{sec:visible}. 

\begin{lemma}\label{l:hitting1}
Let $d\geq 5$. For $z\in\Z^d$, 
let $Z$ be a simple random walk on $\Z^d$ with $Z(0) = z$, which is independent of $X$ and $\mu^{(s)}$, $s\geq 2$. 
Let $P_z$ be its law. Let $X(0) = x$. 
There exist sequences of positive integers $r_k$ and $R_k$ such that 
\[
P_z\otimes{\mathbb P}^{(s_d)}_x \left(\limsup_k \left\{H(A^{(s_d)}(r_k,R_k)) < \infty\right\}\right) = 1 ,\
\]
where $H(A^{(s)}(r,R))$ is the entrance time of $Z$ in $A^{(s)}(r,R)$. 
\end{lemma}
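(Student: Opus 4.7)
The plan is to choose a sequence of scales $(r_k,R_k)$ such that, at each $k$, Lemma~\ref{l:hittingc} applies and can be combined with strong Markov and property~(5) to decouple what happens at scale $k$ from what happened at earlier scales, so that L\'evy's conditional second Borel--Cantelli lemma can be invoked. Let $\varepsilon$ and $c$ denote the constants of Lemma~\ref{l:hittingc}. I choose positive integer sequences $(r_k)_{k\ge 1}$ and $(R_k)_{k\ge 1}$ satisfying, for every $k\ge 1$,
\[
R_1\ge |x|\vee|z|,\qquad r_k^{d-2}\le\varepsilon R_k,\qquad r_{k+1}\ge s_d R_k+1,\qquad R_{k+1}\ge R_k^2+1,
\]
for instance by setting $r_{k+1}=s_d R_k+1$ and $R_{k+1}=\lceil\max(R_k^2+1,\,r_{k+1}^{d-2}/\varepsilon)\rceil$ after choosing $r_1,R_1$ suitably. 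The second condition will make Lemma~\ref{l:hittingc} applicable at each scale; the third will ensure independence across scales of the pieces of $\mu^{(s)}$ relevant to the construction of the sets $A^{(s_d)}(r_k,R_k)$ via property~(5); the fourth will guarantee that the window of $X$ needed to build $A^{(1)}(R_k)$ has not yet been consumed at earlier scales.

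Introduce the stopping times $\sigma_0=0$, $\sigma_k=\inf\{t\ge 0:|Z(t)|>R_k^2\}$ (almost surely finite by transience of $Z$), and $T_0=0$, $T_k=T_{B(R_k)}+\lfloor R_k^2/2\rfloor$. The inequality $R_k\ge R_{k-1}^2+1$ gives $|X(t)|\le R_{k-1}+R_{k-1}^2/2<R_k$ for $t\in[0,T_{k-1}]$, hence $T_{B(R_k)}\ge T_{k-1}$ and $X(T_{k-1})\in B(R_k)$; similarly $Z(\sigma_{k-1})\in B(R_k)$. Set
\[
\mathcal F_k=\sigma\bigl(X|_{[0,T_k]},\,Z|_{[0,\sigma_k]},\,\mu^{(s)}_{r_j,s_d R_j}:1\le j\le k,\,2\le s\le s_d\bigr),
\]
and $F_k=\{Z(t)\in A^{(s_d)}(r_k,R_k)\text{ for some }t\in[\sigma_{k-1},\sigma_k]\}$. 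The key claim is that almost surely
\[
\mathbb P(F_k\mid\mathcal F_{k-1})\ge c.
\]
Indeed, the strong Markov property of $X$ at $T_{k-1}$, the strong Markov property of $Z$ at $\sigma_{k-1}$, and property~(5) applied to the pairwise disjoint sets $S_k=\{w\in W^*:w\text{ meets }B(s_d R_k)\text{ but not }B(r_k)\}$ (disjointness following from $r_{k+1}>s_d R_k$) together imply that the triple $(X|_{[T_{k-1},\infty)},Z|_{[\sigma_{k-1},\infty)},\mu^{(s)}_{r_k,s_d R_k}\text{ for }s=2,\ldots,s_d)$ is independent of $\mathcal F_{k-1}$, with joint law exactly matching the ingredients of Lemma~\ref{l:hittingc} with parameters $(r_k,R_k)$ and starting points $x=X(T_{k-1})$, $z=Z(\sigma_{k-1})$, both in $B(R_k)$. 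Since $A^{(s_d)}(r_k,R_k)$ depends only on these fresh objects, the lemma yields the claimed lower bound.

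L\'evy's conditional second Borel--Cantelli lemma then produces
\[
\{F_k\text{ i.o.}\}=\Bigl\{\sum_{k\ge 1}\mathbb P(F_k\mid\mathcal F_{k-1})=\infty\Bigr\}\quad\text{a.s.,}
\]
and the right-hand event has probability one by the conditional lower bound; since $F_k\subseteq\{H(A^{(s_d)}(r_k,R_k))<\infty\}$, this proves the lemma. The main obstacle is establishing the conditional lower bound: two distinct sources of cross-scale dependence must be controlled simultaneously, namely the single walk $X$ (handled by $R_k\ge R_{k-1}^2+1$, which forces $X$ to stay in $B(R_k)$ up to time $T_{k-1}$, so that the window $[T_{B(R_k)},T_{B(R_k)}+R_k^2/2]$ feeding $A^{(1)}(R_k)$ falls strictly after $T_{k-1}$) and the Poisson measures $\mu^{(s)}$ (handled by $r_{k+1}>s_d R_k$, which together with property~(5) makes the relevant restrictions $\mu^{(s)}_{r_k,s_d R_k}$ mutually independent across $k$).
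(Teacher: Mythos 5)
Your proof is correct and follows essentially the same strategy as the paper's: grow the scales $(r_k,R_k)$ fast enough that the pieces of $X$, $Z$, and the $\mu^{(s)}$ used at scale $k$ decouple from earlier scales via the strong Markov property and Property~(5), use Lemma~\ref{l:hittingc} for a uniform conditional lower bound, and conclude by a conditional Borel--Cantelli argument. The paper invokes Nash's extension of Borel's lemma where you invoke L\'evy's conditional second Borel--Cantelli, and the paper's filtration is generated by $\mu^{(s)}_{r_k}$ and the walks stopped at $T_{B(r_k)}$ rather than your slightly smaller $\mathcal F_{k-1}$, but these are only bookkeeping differences.
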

\begin{proof}
Let $\varepsilon$ be the positive number from Lemma~\ref{l:hittingc}. 
We define $r_k$ and $R_k$ recursively: 
\[
r_0 = \max\left(|x|,|z|\right),~~R_0 = \lceil\varepsilon^{-1} r_0^{d-2}\rceil ,\
\]
and, for $k\geq 1$, 
\[
r_k = d R_{k-1}^2,~~R_k = \lceil\varepsilon^{-1} r_k^{d-2}\rceil .\
\]
(Any sequences that grow faster than these would do.)
We consider the following sequence of (random) subsets of $\Z^d$: 
\[
A_k = A^{(s_d)}(r_k,R_k) ~(\subset B(dR_k)).\
\]
Note that the following properties hold: 
\begin{itemize}
\item[(i)]
the set of vertices $\{X(t)~:~t\leq T_{B(R_k)}+(R_k^2/2)\}$  is contained in $B(r_{k+1})$,  
\item[(ii)]
$r_k$ and $R_k$ satisfy the assumptions of Lemma~\ref{l:hittingc}, and
\item[(iii)]
the set $A_k$ is measurable with respect to the sigma-algebra generated by $\{X(t)~:~t\leq T_{B(r_{k+1})}\}$ and $\mu^{(i)}_{r_k,r_{k+1}}$ for $2\leq i\leq s_d$.
\end{itemize}
Property (i) follows from the fact that $R_k + (R_k^2/2) < r_{k+1}$. 
Property (ii) follows from our choice of $\varepsilon$ and from the fact that $r_k^{d-2} \leq \varepsilon R_k$. 
In order to see that property (iii) holds, note that, by the definition of $A^{(s)}(r,R)$ in \eqref{eq:defarr1} and \eqref{eq:defarr2}, 
set $A^{(i-1)}(r_k,R_k)$ is contained in $B(iR_k)$. Therefore, 
set $A_k$ is measurable with respect to the sigma-algebra generated by $\{X(t)~:~t\leq T_{B(R_k)} + (R_k^2/2)\}$ and $\mu^{(i)}_{r_k,iR_k}$ for $i\leq s_d$. 
Since $s_d R_k < r_{k+1}$ and $\{X(t)~:~t\leq T_{B(R_k)}+(R_k^2/2)\}\subset B(r_{k+1})$, property (iii) follows.  
\\[5mm]

Consider the events $\Gamma_k = \{H(A_k) < T_{B(R_k^2)}\}$ and their indicator functions $\gamma_k = I(\Gamma_k)$. 
In this definition, $H(A_k)$ is the entrance time of $Z$ in $A_k$ and $T_{B(R_k^2)}$ is the exit time of $Z$ from $B(R_k^2)$. 
We will show that there exists a positive constant $c$ such that 
for all $k\geq 1$ and for any $g_{1},\ldots, g_{k-1}\in\{0,1\}$, 
\begin{equation}\label{eq:Borel}
P_z\otimes{\mathbb P}^{(s_d)}_x\left(\Gamma_k~|~\gamma_{1} = g_{1},\ldots,\gamma_{k-1} = g_{k-1}\right) \geq c > 0 .\
\end{equation}
The result will then follow from Borel's lemma \cite{Borel}:
\begin{lemma}
Consider a probability space $(\Omega,\mathcal F,\mathbb P)$ and a sequence of events 
$\Delta_n\in\mathcal F$. 
Let $\delta_n = I(\Delta_n)$ be the indicator function of the event $\Delta_n$. 
If there exists a sequence $b_n$ such that $\sum_n b_n =\infty$ and for any 
$d_i\in \{0,1\}$, $i=1,\ldots,n-1$, 
\[
\mathbb P(\Delta_n~|~\delta_1 = d_1,\ldots, \delta_{n-1} = d_{n-1}) \geq b_n > 0
\]
then
\[
\mathbb P\left(\limsup_k\Delta_k\right) = 1 .\
\]
\end{lemma}

We will now prove \eqref{eq:Borel}. We denote by $E$ the event $\{\gamma_{1} = g_{1},\ldots,\gamma_{k-1} = g_{k-1}\}$.
By property (iii) above and the fact that $\{Z(t)~:~t\leq T_{B(R_{k-1}^2)}\}\subset B(r_k)$, 
the event $E$ is measurable with respect to the sigma-algebra $\mathcal F_{k-1}$ generated by 
$\{X(t)~:~t\leq T_{B(r_k)}\}$, $\mu^{(s)}_{r_k}$ for $s\leq s_d$, and $\{Z(t)~:~t\leq T_{B(r_k)}\}$. 
(Here, the two occurrences of $T_{B(r_k)}$ correspond to the exit times of $X$ and $Z$ from $B(r_k)$, respectively, which are, in general, different.)
By property (5) of $\mathrm{Pois}(u,W^*)$, the sets of point measures $\{\mu^{(s)}_{r_k}\}_{s\geq 2}$ and $\{\mu^{(s)}_{r_k,r_{k+1}}\}_{s\geq 2}$ are independent. 
Therefore, using strong Markov property for $X$ and $Z$ and integrating over the $\mu^{(s)}_{r_k,r_{k+1}}$, $s\geq 2$, we obtain
\[
P_z\otimes{\mathbb P}^{(s_d)}_x\left(\Gamma_k\cap E\right) \geq 
E_z\otimes{\mathbb E}^{(s_d)}_x \left[P_{z'}\otimes{\mathbb P}^{(s_d)}_{x'}\left(\Gamma_k\right) ; E\right] ,\
\]
where $x' = X(T_{B(r_k)})$, and $z' = Z(T_{B(r_k)})$. 
It follows from Lemma~\ref{l:hittingc} that 
\[
P_{z'}\otimes{\mathbb P}^{(s_d)}_{x'}\left(\Gamma_k\right)\geq c. 
\] 
This proves \eqref{eq:Borel} and completes the proof of the lemma. 
\end{proof}

As a corollary of Lemma~\ref{l:hitting1} we obtain the following lemma. 
Let $\mu^{(i)}$, $i\in\{1,\ldots,s_d-1\}$ be independent Poisson point measures with distribution $\mathrm{Pois}(u,W^*)$, 
where $s_d$ is defined in \eqref{def:sd}. 
Let $\mathbb P$ be their joint law.
We construct the graph $G' = (V',E')$ as follows. The set of vertices $V'$ is the set of trajectories from $\cup_{i=1}^{s_d-1}\mathrm{Supp}(\mu^{(i)})$, 
and the set of edges $E'$ is the set of pairs of different trajectories from $\cup_{i=1}^{s_d-1}\mathrm{Supp}(\mu^{(i)})$ that intersect. 
\begin{lemma}\label{l:interlacement}
Let $d\geq 5$ and $u > 0$. 
Then, with the above notation, 
\[
\mathbb P(\mathrm{diam}(G') \leq s_d) = 1 .\
\]
\end{lemma}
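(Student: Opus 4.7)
The plan is to apply the Slivnyak--Mecke theorem to the Poisson point measure $\mu' = \mu^{(1)} + \cdots + \mu^{(s_d-1)}$, which by property~(4) of $\mathrm{Pois}(u,W^*)$ has distribution $\mathrm{Pois}((s_d-1)u, W^*)$. Writing $\nu'$ for its intensity,
\begin{equation*}
\mathbb{E}\Bigl[\, \#\bigl\{(w_1,w_2) \in V' \times V' : w_1 \neq w_2,\ \rho_{G'}(w_1,w_2) > s_d\bigr\} \,\Bigr]
= \int\!\!\int \mathbb{P}\bigl(\rho(w_1,w_2) > s_d \text{ in } \mu' + \delta_{w_1} + \delta_{w_2}\bigr)\, d\nu'(w_1)\, d\nu'(w_2).
\end{equation*}
If the right-hand side equals $0$, then the non-negative integer-valued random variable inside the expectation on the left vanishes almost surely, which is exactly the assertion $\mathrm{diam}(G') \leq s_d$ almost surely.

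To make the right-hand side integrand vanish for $\nu' \otimes \nu'$-almost every pair $(w_1, w_2)$, the idea is to use $w_1$ and $w_2$ themselves as the two independent simple random walks appearing in Lemma~\ref{l:hitting1}. For a canonical parametrization of each trajectory --- say, time $0$ being the first visit to the lexicographically smallest vertex in its range (which exists because the trajectories tend to infinity) --- property~(3) of $\mathrm{Pois}(u,W^*)$ implies that, under $\nu'$, the forward parts $X = (w_1(t))_{t\geq 0}$ and $Z = (w_2(t))_{t \geq 0}$ are distributed as independent simple random walks starting from the chosen vertices. Relabeling the $s_d - 1$ independent Poisson measures $\mu^{(1)}, \ldots, \mu^{(s_d-1)}$ as $\mu^{(2)}, \ldots, \mu^{(s_d)}$ in the notation of Section~\ref{sec:visible} and building $A^{(s_d)}(r_k, R_k)$ from $X$ and these measures, Lemma~\ref{l:hitting1} asserts that $Z$ almost surely hits $A^{(s_d)}(r_k, R_k)$ for infinitely many $k$. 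A Fubini argument then converts this into an almost-sure statement under the Poisson laws alone, for $\nu' \otimes \nu'$-almost every $(w_1,w_2)$. On the event that $Z$ hits $A^{(s_d)}(r_k, R_k)$, definitions \eqref{eq:defarr1}--\eqref{eq:defarr2} and Remark~\ref{rem:distance} produce a chain
$
w_1 \sim w^{(2)} \sim w^{(3)} \sim \cdots \sim w^{(s_d)} \sim w_2
$
of $s_d + 1$ trajectories in which each consecutive pair shares a vertex, giving $\rho_{G'}(w_1, w_2) \leq s_d$.

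The main technical obstacle will be the Fubini step: Lemma~\ref{l:hitting1}'s conclusion holds almost surely under the product of the two simple random walk laws and the Poisson laws, whereas I need it almost surely under only the Poisson laws for $\nu' \otimes \nu'$-almost every $(w_1, w_2)$. Concretely, one must identify, under the chosen parametrization, the forward marginal of $\nu'$ with a $\sigma$-finite mixture of simple random walk laws $P_x$ (an identification supplied by property~(3) of $\mathrm{Pois}(u,W^*)$), and check that the backward tails of $w_1$ and $w_2$ --- which are simple random walks conditioned never to return to their starting vertices --- do not interfere with the independence between $(X,Z)$ and the Poisson measures required by Lemma~\ref{l:hitting1}.
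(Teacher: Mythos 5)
Your approach is genuinely different from the paper's. You propose to use the Slivnyak--Mecke formula for the Poisson process $\mu'$ (or, more accurately, for the marked Poisson process retaining the labels $1,\ldots,s_d-1$) and then verify the $\nu'\otimes\nu'$-almost-everywhere vanishing of the Palm probability via Lemma~\ref{l:hitting1}. The paper avoids Palm calculus altogether: it fixes a ball $B(r)$, restricts attention to the trajectories of $\mathrm{Supp}(\mu^{(i)})$ that hit $B(r)$, parametrizes each such trajectory by its first hit of $B(r)$, applies Lemma~\ref{l:hitting1} and Remark~\ref{rem:distance} to each pair of these forward walks, and then lets $r\to\infty$ (using that any pair of trajectories hits a common ball).

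The reason the paper's parametrization is chosen is not incidental, and this is where your argument has a gap. You claim that, under the parametrization ``time~$0$ is the first visit to the lexicographically smallest vertex in the range,'' property~(3) of $\mathrm{Pois}(u,W^*)$ implies the forward parts are simple random walks under $\nu'$. That is not what property~(3) says. Property~(3) gives the simple random walk law for the forward part \emph{only} under the parametrization ``time~$0$ is the entrance time into a fixed finite set $A$,'' because the equilibrium-measure description of $\nu^*$ is relative to such an $A$. Under the lexicographic parametrization, the event defining the time origin constrains the \emph{entire} trajectory (no vertex of the forward or backward range may be lexicographically smaller than $w(0)$), so the forward part becomes a conditioned walk, not a simple random walk. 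Consequently the identification you invoke to feed $w_1,w_2$ into Lemma~\ref{l:hitting1} fails as stated.

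This is the concrete obstacle hiding behind what you called ``the Fubini step.'' It can in principle be repaired: the forward part under the lexicographic parametrization is absolutely continuous with respect to the simple random walk law (the conditioning event has positive $P_{w(0)}$-probability in $d\geq 3$), and almost-sure conclusions such as that of Lemma~\ref{l:hitting1} transfer under absolute continuity, provided one also checks the independence structure between $X$, $Z$, and the reduced marked process. But you must make this absolute-continuity argument explicitly; as written, the claim is incorrect. Alternatively, one can disintegrate $\nu'$ over the first hit of $B(r)$ and restrict to trajectories with $r(w^*)=r$, but then the forward part is an SRW conditioned to avoid $B(r-1)$, again requiring an absolute-continuity patch. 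The paper's route --- condition on hitting a fixed $B(r)$, where property~(3) gives the SRW law directly, and then exhaust over $r$ --- avoids both the $\sigma$-finiteness of $\nu'$ and the conditioning defect entirely, which is why it is both shorter and cleaner.
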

\begin{proof}
Take a positive integer $r$. 
By Definition~\ref{def:RIrestricted} (see also the notation there), 
for each $i\in\{1,\ldots,s_d-1\}$, $\mu^{(i)} = \mu^{(i)}_r + \mu^{(i)}_{r,\infty}$, and
the measures $\mu^{(i)}_r$ and $\mu^{(i)}_{r,\infty}$ are independent by property (5) of $\mathrm{Pois}(u,W^*)$. 
For any $i\in\{1,\ldots,s_d-1\}$, let $N^{(i)}$ be the number of trajectories in $\mathrm{Supp}(\mu^{(i)}_r)$. 
In other words, $N^{(i)}$ is the number of doubly-infinite trajectories modulo time-shift from $\mathrm{Supp}(\mu)$ that intersect $B(r)$. 
By property (1) of $\mathrm{Pois}(u,W^*)$, $N^{(i)}$ has the Poisson distribution with parameter $u\mathrm{cap}(B(r))$. 
By the definition of $\mathrm{Pois}(u,W^*)$, we know that (recall the notation from Section~\ref{sec:model}), 
for each $i\in\{1,\ldots,s_d-1\}$, $\mu^{(i)}_r = \sum_{j=1}^{N^{(i)}}\delta_{\pi^*(X^{(i)}_j)}$, where 
$X^{(i)}_1,\ldots,X^{(i)}_{N^{(i)}}$ are doubly-infinite trajectories from $W$ such that 
(a) they are parametrized in such a way that $X^{(i)}_j(0)\in B(r)$ and $X^{(i)}_j(t)\notin B(r)$ for all $t<0$ and for all $j\in\{1,\ldots,N^{(i)}\}$, 
and (b) they satisfy properties (2) and (3) of $\mathrm{Pois}(u,W^*)$.  
In particular, given $N^{(i)}$ and $(X^{(i)}_j(0))_{j=1}^{N^{(i)}}$, 
the forward trajectories $(X^{(i)}_j(t),t\geq 0)_{j=1}^{N^{(i)}}$ are distributed as independent simple random walks. 

Property (5) of $\mathrm{Pois}(u,W^*)$ gives that for each $i\in\{1,\ldots,s_d-1\}$, 
all the random walks $(X^{(i)}_j(t),t\geq 0)_{j=1}^{N^{(i)}}$ are independent from $\mu^{(k)}_{r,\infty}$ for $k\in\{1,\ldots,s_d-1\}$.
Therefore, Lemma~\ref{l:hitting1} and Remark~\ref{rem:distance} imply that, 
given $N^{(i)}$ and $(X^{(i)}_j(0))_{j=1}^{N^{(i)}}$ for all $i\in\{1,\ldots,s_d-1\}$, almost surely, for each pair of different random walks 
$(X^{(i)}_j(t),t\geq 0)$ and $(X^{(k)}_l(t),t\geq 0)$, 
there exist doubly-infinite trajectories $w_m\in\mathrm{Supp}(\mu^{(m)}_{r,\infty})$, $1\leq m\leq s_d-1$, 
such that $X^{(i)}_j\cap w_1\neq \emptyset$, $X^{(k)}_l\cap w_{s_d-1}\neq\emptyset$, and $w_i\cap w_{i+1}\neq \emptyset$ for $i\in\{1,\ldots,s_d-2\}$.
Since this holds for any $r$, the result follows. 
\end{proof}

\begin{proof}[Proof of Theorem~\ref{thm:diameter}: upper bound on diameter]
We complete the proof of Theorem~\ref{thm:diameter} by showing that $\mathbb P(\mathrm{diam}(G)\leq s_d) = 1$. 
By Remark~\ref{rem:d34}, we may and will assume that $d\geq 5$. 
Let $\mu^{(1)},\ldots,\mu^{(s_d-1)}$ be independent Poisson point measures on $W^*$ with distribution $\mathrm{Pois}(u/(s_d-1),W^*)$. 
We construct the graph $G' = (V',E')$ as follows. The set of vertices $V'$ is the set of trajectories from $\cup_{i=1}^{s_d-1}\mathrm{Supp}(\mu^{(i)})$, 
and the set of edges $E'$ is the set of pairs of different trajectories from $\cup_{i=1}^{s_d-1}\mathrm{Supp}(\mu^{(i)})$ that intersect. 
Lemma~\ref{l:interlacement} implies that the diameter of $G'$ is at most $s_d$. 
On the other hand, by property (4) of $\mathrm{Pois}(u,W^*)$, graphs $G$ and $G'$ have the same law. 
This completes the proof. 
\end{proof}

\bigskip
\textbf{Acknowledgments.}  We would like to thank A.-S. Sznitman for suggesting to look for an alternative proof of the connectivity of the random interlacement, inspiring discussions, and valuable comments. We also thank A. Drewitz for comments on the manuscript.


\begin{thebibliography}{99}

\bibitem{BKPS}
I. Benjamini, H. Kesten, Y. Peres and O. Schramm (2004)
Geometry of the uniform spanning forest: Transitions in dimensions $4$, $8$, $12$,...
{\it Ann. Math.} {\bf 160}, 465-491.

\bibitem{CP:distance}
J. \v{C}ern\'y and S. Popov (2011) On the internal distance in the interlacement set. 
arXiv:1111.3979. 

\bibitem{DVJ}
D. J. Daley and D. Vere-Jones (2008) {\it An Introduction to the theory of point processes 
Volume II: General theory and structure,} 2nd edition, Springer.

\bibitem{HHS}
T. Hara, R. van der Hofstad and G. Slade (2003) 
Critical two-point functions and the lace expansion for spread-out high-dimensional percolation and related models. 
{\it Ann. Probab.} {\bf 31}, 349-408. 

\bibitem{JainOrey}
N. C. Jain and S. Orey (1973) Some properties of random walk paths. 
{\it J. Math. Anal. Appl.} {\bf 43}, 795-815. 

\bibitem{Lawler34}
G. Lawler (1980) A self-avoiding random walk.
{\it Duke. Math. J.} {\bf 47}, 655-694.

\bibitem{LawlerRW}
G. Lawler (1991) {\it Intersections of random walks,} Birkh\"auser, Basel.

\bibitem{MW}
J. M{\o}ller and R. P. Waagepetersen (2003) 
{\it Statistical Inference and Simulation for Spatial Point Processes}, Chapman and Hall/CRC, Boca Raton.

\bibitem{Borel}
S. W. Nash (1954) An extension of the Borel-Cantelli lemma. 
{\it Ann. Math. Stat.} {\bf 25}, 165-167.

\bibitem{PZ}
R. E. A. C. Paley and A. Zygmund (1932) 
A note on analytic functions in the unit circle. 
{\it Proc. Camb. Phil. Soc.} {\bf 28}, 266-272.

\bibitem{PT}
E. B. Procaccia and J. Tykesson (2011) Geometry of the random interlacement.
{\it Elect. Communic. in Probab.} {\bf 16}, 528-544.

\bibitem{RS:resistance}
B. R\'ath and A. Sapozhnikov (2011) On the transience of random interlacements. 
{\it Elect. Communic. in Probab.} {\bf 16}, 379-391. 

\bibitem{RS:percolation}
B. R\'ath and A. Sapozhnikov (2011)
The effect of small quenched noise on connectivity properties of random interlacements. arXiv:1109.5086.

\bibitem{Spitzer}
F. Spitzer (2001) {\it Principles of random walks,} 2nd edition, Springer-Verlag.

\bibitem{sznitman_cylinders}
A.-S. Sznitman (2009) Random walks on discrete cylinders and random interlacements. 
{\it Probab. Theory Relat. Fields} {\bf 145}, 143-174.

\bibitem{SznitmanAM}
A.-S. Sznitman (2010) Vacant set of random interlacements and percolation. 
{\it Ann. Math.} {\bf 171}, 2039-2087.

\bibitem{SznitmanPTRF}
A.-S. Sznitman (2011) 
A lower bound on the critical parameter of interlacement percolation in high dimension. 
{\it Probab. Theory Relat. Fields} {\bf 150}, 575-611.

\bibitem{windisch_torus}
D. Windisch (2008) Random walk on a discrete torus and random interlacements. 
{\it Elect. Communic. in Probab.} {\bf 13}, 140-150.


\end{thebibliography}
\end{document}